\newtheorem{theorem}{Theorem}[section]
\newtheorem{lemma}[theorem]{Lemma}
\newtheorem{propo}[theorem]{Proposition}
\newtheorem{remark}[theorem]{Remark}
\newcommand{\nach}{\; \longrightarrow \;}
\newcommand{\ipl}{\langle}
\newcommand{\ipr}{\rangle}
\newcommand\F{\mathrm{F}}
\newcommand\summ{\textstyle\sum\limits}
\newcommand\N{\mathbb{N}}
\DeclareMathOperator{\argmin}{arg\, min}
 \DeclareMathOperator{\nr}{\mathcal
N} 
\newcommand\norm[1]{\|#1\|}
\newcommand\set[1]{\{#1\}}
\begin{document}

\title{Modified iterated Tikhonov methods for solving systems of nonlinear ill-posed
equations} \setcounter{footnote}{1}

\author{
J.~Baumeister%
\thanks{Department of Mathematics, Goethe University,
        Robert-Mayer Str. 6-10, D-60054 Frankfurt Main, Germany
        \href{mailto:baumeist@math.uni-frankfurt.de}{\tt baumeist@math.uni-frankfurt.de}.}
\and
A.~De\,Cezaro%
\thanks{Institute of Mathematics Statistics and Physics,
        Federal University of Rio Grande, Av. Italia km 8, 96201-900
        Rio Grande, Brazil
        \href{mailto:decezaro@impa.br}{\tt decezaro@impa.br}.}
\and
A.~Leit\~ao%
\thanks{Department of Mathematics, Federal University of St. Catarina,
        P.O. Box 476, 88040-900 Florian\'opolis, Brazil
        \href{mailto:acgleitao@gmail.com}{\tt acgleitao@gmail.com}.} }
\date{\small \today}

\maketitle

\begin{abstract}
We investigate iterated Tikhonov methods coupled with a Kaczmarz strategy
for obtaining stable solutions of nonlinear systems of ill-posed operator
equations. We show that the proposed method is a convergent regularization
method.
In the case of noisy data we propose a modification, the so called
loping iterated Tikhonov-Kaczmarz method, where a sequence of relaxation
parameters is introduced and a different stopping rule is used. Convergence
analysis for this method is also provided.
\end{abstract}

\noindent {\small {\bf Keywords.} Nonlinear systems; Ill-posed
equations; Regularization; iterated Tikhonov method.}
\medskip

\noindent {\small {\bf AMS Classification:} 65J20, 47J06.}

\section{Introduction} \label{sec:intro}

In this paper we propose a new method for obtaining regularized
approximations of systems of nonlinear ill-posed operator equations.

The \textit{inverse problem} we are interested in consists of
determining an unknown physical quantity $x \in X$ from the set of
data $(y_0, \dots,  y_{N-1}) \in Y^N$, where $X$, $Y$ are Hilbert
spaces and $N \geq 1$. In practical situations, we do not know the
data exactly. Instead, we have only approximate measured data
$y_i^\delta \in Y$ satisfying
\begin{equation}\label{eq:noisy-i}
    \norm{ y_i^\delta - y_i } \le \delta_i \, , \ \ i = 0, \dots, N-1 \, ,
\end{equation}
with $\delta_i > 0$ (noise level). We use the notation $\delta :=
(\delta_0, \dots, \delta_{N-1})$. The finite set of data above is obtained
by indirect measurements of the parameter, this process being described by
the model
\begin{equation}\label{eq:inv-probl}
    F_{i}(x)  =  y_{i} \, , \ \ i = 0, \dots, N-1 \, ,
\end{equation}
where $F_i: D_i \subset X \to Y$, and $D_i$ are the corresponding
domains of definition.

Standard methods for the solution of system \eqref{eq:inv-probl} are
based in the use of \textit{Iterative type} regularization methods
\cite{BakKok04, EngHanNeu96, KalNeuSch08}) or
\textit{Tikhonov type} regularization methods
\cite{EngHanNeu96, Mor93, TikArs77} after rewriting (\ref{eq:inv-probl})
as a single equation $F( x ) =  y$, where
\begin{align} \label{eq:single-op}
    F  : = ( F_0, \dots, F_{N-1} ): \bigcap\nolimits_{i=0}^{N-1} D_i \to Y^N
\end{align}
and $y := (y^0, \dots,  y^{N-1})$. However these methods become
inefficient if $N$ is large or the evaluations of $F_i(x)$ and
$F_i'(x)^\ast$ are expensive. In such a situation, Kaczmarz type
methods \cite{Kac37, Mcc77, Nat97} which cyclically
consider each equation in (\ref{eq:inv-probl}) separately are much
faster \cite{Nat97} and are often the method of choice in practice.

For recent analysis of Kaczmarz type methods for systems of
ill-posed equations, we refer the reader to
\cite{BKL09, HLR09, CHLS08, HLS07}.
The starting point of our approach is the iterated Tikhonov method
\cite{HG98, BS87, Lar75} for solving linear ill-posed problems. This
regularization method is defined by
$$
  x_{k+1}^\delta \ \in \ \argmin \big\{ \| F\, x - y^\delta \|^2
                    + \alpha \| x - x_k^\delta \|^2 \big\} \, ,
$$
what corresponds to the iteration
$$
  x_{k+1}^\delta \ = \ x_k^\delta - \alpha^{-1} F^*
                     ( F\, x_{k+1}^\delta  - y^\delta ) \, .
$$
Motivated by the ideas in \cite{BKL09, HLS07}, we propose
in this article an \textit{iterated Tikhonov-Kaczmarz method}
(\textsc{iTK} method) for solving (\ref{eq:inv-probl}). This
iterative method is defined by
\begin{equation} \label{eq:litk-tf}
x_{k+1}^\delta \ \in \ \argmin \big\{ \| F_{[k]}(x) - y_{[k]}^\delta \|^2
                      + \alpha \| x - x_k^\delta \|^2 \big\} \, .
\end{equation}
Here $\alpha > 0$ is an appropriate chosen number (see \eqref{eq:def-alp-tau}
below), $[k] := (k \mod N) \in \set{0, \dots, N-1}$, and $x_0^\delta = x_0
\in X$ is an initial guess, possibly incorporating some \textit{a
priori} knowledge about the exact solution.

\begin{remark} \label{rem:non-uniq}
Notice that from the iteration formula in \eqref{eq:litk-tf} we conclude that
\begin{equation} \label{eq:itk}
x_{k+1}^\delta \ = \ x_{k}^\delta - \alpha^{-1} F_{[k]}'(x_{k+1}^\delta)^*
                  ( F_{[k]}(x_{k+1}^\delta) - y_{[k]}^\delta ) \, .
\end{equation}
As usual for nonlinear Tikhonov type regularization, the global minimum
for the Tikhonov functionals in \eqref{eq:litk-tf} need not be unique.
For exact data we obtain the same convergence statements for any possible
sequence of iterates (see Section~\ref{sec:conv-exact}) and we will accept
any global solution. For noisy data, a (strong) semi-convergence result
is obtained under a smooth assumption on the functionals $F_i$ (see
assumption (A4) in Section~\ref{sec:conv-noisy}), which guarantees
uniqueness of global minimizers in \eqref{eq:litk-tf}.
\end{remark}

\begin{remark}
It is worth noticing that some authors consider iterated Tikhonov regularization
with the number of iterations $n \in \mathbb N$ being fixed \cite{GS00,KN08,Sch93a}.
In this case, $\alpha$ plays the role of the regularization parameter. This
regularization method is also called $n$-th iterated Tikhonov method.
\end{remark}

The \textsc{iTK} method consists in incorporating the Kaczmarz
strategy in the iterated Tikhonov method. This strategy is analog to the
one introduced in \cite{HLS07} regarding the Landweber-Kaczmarz
(\textsc{LK}) iteration, in \cite{CHLS08} regarding the
Steepest-Descent-Kaczmarz (\textsc{SDK}) iteration, in \cite{HLR09}
regarding the Expectation-Maximization-Kaczmarz (\textsc{EMK})
iteration.
As usual in Kaczmarz type algorithms, a group of $N$ subsequent steps
(starting at some multiple $k$ of $N$) shall be called a {\em cycle}.
The iteration should be terminated when, for the first time, at least
one of the residuals $\norm{ F_{[k]}(x_{k+1}^\delta) - y_{[k]}^\delta}$
drops below a specified threshold within a cycle. That is, we stop the
iteration at
\begin{equation} \label{eq:def-discrep}
  k_*^\delta \ := \ \min \{ lN \in \N: \,
  \norm{ F_i(x_{lN+i+1}^\delta) - y_i^\delta} \le \tau \delta_i \, , \
  {\rm for \ some} \ 0 \le i \le N-1 \} \, ,
\end{equation}
where $\tau>1$ still has to be chosen (see \eqref{eq:def-alp-tau} below).
Notice that for $k = k_*^\delta$ we do not necessarily have
$\norm{ F_i (x_{k_*^\delta+i}^\delta) - y_i^\delta} \le \tau \delta_i$ for
all $i = 0, \dots, N-1$.
In the case of noise free data, $\delta_i = 0$ in \eqref{eq:noisy-i},
the stop criteria in \eqref{eq:def-discrep} may never be reached, i.e.
$k_*^\delta = \infty$ for $\delta_i = 0$.


In the  case of noisy data, we also propose a loping version of
\textsc{iTK}, namely, the \textsc{l-iTK} iteration.
In the \textsc{l-iTK} iteration we omit an update of the \textsc{iTK}
iteration (within one cycle) if the corresponding $i$-th residual is
below some threshold. Consequently, the \textsc{l-iTK} method is not
stopped until all residuals are below the specified threshold. We provide
a complete convergence analysis for both \textsc{iTK} and \textsc{l-iTK}
iterations. In particular we prove that \textsc{l-iTK} is a convergent
regularization method in the sense of \cite{EngHanNeu96}.

The article is outlined as follows.
In Section \ref{sec:basic} we formulate basic assumptions and derive
some auxiliary estimates required for the analysis.
In Section \ref{sec:conv-exact} a convergence result for the \textsc{iTK}
method is proved.
In Section \ref{sec:conv-noisy} a semi-convergence result for the
\textsc{iTK} method for noisy data is proved.
In Section~\ref{sec:litk} we introduce (for the case of noisy data) a
loping version of the \textsc{iTK} method and prove a semi-convergence
result for this new method.
In Section~\ref{sec:applications} we discuss some possible applications related
to parameter identification in elliptic PDE's.
Section~\ref{sec:conclusion} is devoted to final remarks an conclusions.

\section{Assumptions and preliminary results} \label{sec:basic}

We begin this section by introducing some assumptions, that are
necessary for the convergence analysis presented in the next
section. These assumptions derive from the classical assumptions
used in the analysis of iterative regularization methods
\cite{EngHanNeu96, KalNeuSch08, Sch93a}.
\medskip

\noindent (A1) \ 
The operators $F_i$ are weakly sequentially continuous and Fr\'echet
differentiable; the corresponding domains of definition $D_i$ are
weakly closed. Moreover, we assume the existence of $x_0 \in X$, $M > 0$,
and $\rho > 0$ such that
\begin{equation} \label{eq:a-dfb}
\| F_i'(x) \| \ \le \ M \, , \quad \ x \in B_\rho(x_0)
                                    \subset \bigcap\nolimits_{i=0}^{N-1} D_i \, .
\end{equation}
Notice that $x_0^\delta =x_0$ is used as starting value of the
\textsc{iTK} iteration.
\medskip

\noindent (A2) \ 
This is an uniform assumption on the nonlinearity of the operators $F_i$.
We assume that the {\em local tangential cone condition} \cite{EngHanNeu96,
KalNeuSch08}
\begin{equation} \label{eq:a-tcc}
\| F_i(x) - F_i(\bar{x}) -  F_i'(\bar{x})( x - \bar{x}) \|_Y \ \leq \
   \eta \norm{ F_i(x)-F_i(\bar{x}) }_{Y} \, , \qquad
     x, \bar{x} \in B_{\rho}(x_0)
\end{equation}
holds for some $\eta < 1$. 
\medskip

\noindent (A3) \ 
There exists and element $x^* \in B_{\rho/4}(x_0)$ such that $F(x^*) = y$,
where $y = (y_0, \dots,  y_{N-1})$ are the exact data satisfying
\eqref{eq:noisy-i}.
\medskip

We are now in position to choose the positive constants $\alpha$
and $\tau$ in \eqref{eq:itk}, \eqref{eq:def-discrep}. For the rest of
this article we shall assume
\begin{equation} \label{eq:def-alp-tau}
\alpha \ > \ \frac{16}{3} \Big( \frac{\delta_{max}}{\rho} \Big)^2 \, , \quad\quad
\tau \ > \ \frac{1+\eta}{1-\eta} \geq 1 \, ,
\end{equation}
where $\delta_{max} := \max_j \{ \delta_j \}$.
In particular, for linear problems we can choose $\tau = 1$.
Moreover, for exact data (i.e., $\delta_j = 0$, for $j=0,\dots,N-1$)
we require simply $\alpha > 0$.
\medskip

In the sequel we verify some basic results that are necessary for
the convergence analysis derived in the next section. The first
result concerns the well-definiteness of the Tikhonov functionals
\begin{equation} \label{eq:def-Jk}
J_k(x) \ := \ \| F_{[k]}(x) - y_{[k]}^\delta \|^2
            + \alpha \| x - x_k^\delta \|^2 \, ,
\end{equation}
which obviously relate to iteration \eqref{eq:itk} due to the fact that
$x_{k+1}^\delta \in \argmin\, J_k(x)$.

\begin{lemma} \label{lem:ex-min-tikh}
Let assumption (A1) be satisfied. Then each Tikhonov functional $J_k$ in
\eqref{eq:def-Jk} attains a minimizer on $X$.
\end{lemma}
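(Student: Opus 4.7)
The plan is to apply the direct method of the calculus of variations to the functional $J_k$, using only the topological parts of assumption (A1): weak sequential continuity of $F_{[k]}$ and weak closedness of its domain $D_{[k]}$. The Fréchet differentiability and the derivative bound are not needed for existence; they enter only in the convergence analysis that follows.

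First, I would note that $J_k(x) \ge 0$ for every $x \in D_{[k]}$, so $m := \inf_{x \in D_{[k]}} J_k(x)$ is finite, and one can pick a minimizing sequence $\{x_n\} \subset D_{[k]}$ with $J_k(x_n) \to m$. Since $\alpha \|x_n - x_k^\delta\|^2 \le J_k(x_n)$, the sequence $\{x_n\}$ is bounded in $X$. A Hilbert-space argument then yields a subsequence (not relabelled) and some $\bar{x} \in X$ with $x_n \rightharpoonup \bar{x}$.

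The next step is to transfer this weak limit through the three ingredients of $J_k$. Because $D_{[k]}$ is weakly closed, $\bar{x} \in D_{[k]}$, so $J_k(\bar{x})$ is defined. Weak sequential continuity of $F_{[k]}$ gives $F_{[k]}(x_n) \rightharpoonup F_{[k]}(\bar{x})$, hence $F_{[k]}(x_n) - y_{[k]}^\delta \rightharpoonup F_{[k]}(\bar{x}) - y_{[k]}^\delta$ in $Y$. The norm in a Hilbert space being weakly lower semicontinuous, we obtain
\[
 \|F_{[k]}(\bar{x}) - y_{[k]}^\delta\|^2 \;\le\; \liminf_{n\to\infty}\|F_{[k]}(x_n) - y_{[k]}^\delta\|^2,
 \qquad
 \|\bar{x} - x_k^\delta\|^2 \;\le\; \liminf_{n\to\infty}\|x_n - x_k^\delta\|^2.
\]
Adding the two inequalities (the second weighted by $\alpha$) yields $J_k(\bar{x}) \le \liminf_n J_k(x_n) = m$, whence $\bar{x}$ is a minimizer.

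I do not expect a serious obstacle here; the proof is a textbook direct method. The only point worth a word of care is to make sure the weak limit lies in the domain $D_{[k]}$, which is precisely why assumption (A1) demands weak closedness of the $D_i$'s. Non-uniqueness of the minimizer is not an issue for this lemma (and is explicitly acknowledged in Remark~\ref{rem:non-uniq}).
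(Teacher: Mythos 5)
Your proof is correct and is exactly the standard direct-method argument that the paper invokes by citing \cite[Chapter~10]{EngHanNeu96} (the paper gives no details of its own). The one point you rightly flag --- that the weak limit of the minimizing sequence must land in $D_{[k]}$, which is where the weak closedness in (A1) is used --- is handled properly, so nothing is missing.
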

\noindent {\it Proof.} See \cite[Chapter~10]{EngHanNeu96}.
\mbox{} \hfill $\Box$ \medskip

The assertion of Lemma~\ref{lem:ex-min-tikh} still holds true if, instead of
(A1), we assume that the operator $F_{[k]}$ is continuous and weakly closed,
and that $D(F_{[k]})$ is weakly closed \cite{EngHanNeu96}.
In the next lemma we prove an estimate for the residual of the \textsc{iTK}
iteration.

\begin{lemma} \label{lem:resid-estim}
Let $x_k^\delta$ and $\alpha$ be defined by \eqref{eq:itk} and
\eqref{eq:def-alp-tau} respectively. Then
\begin{equation} \label{eq:resid-estim}
 \| F_{[k]}(x_{k+1}^\delta) - y_{[k]}^\delta \|^2 \ \leq \
 \| F_{[k]}(x_{k}^\delta) - y_{[k]}^\delta \|^2 \, , \qquad k < k_*^\delta \, .
\end{equation}
\end{lemma}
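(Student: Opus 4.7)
The plan is to exploit the fact that $x_{k+1}^\delta$ is, by definition, a global minimizer of the Tikhonov functional $J_k$ introduced in \eqref{eq:def-Jk}; existence of such a minimizer is ensured by Lemma~\ref{lem:ex-min-tikh} together with assumption (A1). Therefore, in particular, $J_k(x_{k+1}^\delta) \le J_k(x_k^\delta)$.

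Writing out what this inequality means using the definition of $J_k$, and noting that the regularization term on the right-hand side vanishes because it is evaluated at $x = x_k^\delta$, one obtains
\begin{equation*}
\| F_{[k]}(x_{k+1}^\delta) - y_{[k]}^\delta \|^2 + \alpha \, \| x_{k+1}^\delta - x_k^\delta \|^2
\ \le \ \| F_{[k]}(x_k^\delta) - y_{[k]}^\delta \|^2 \, .
\end{equation*}
Since $\alpha > 0$ (from \eqref{eq:def-alp-tau}) and the term $\| x_{k+1}^\delta - x_k^\delta \|^2$ is nonnegative, it can be dropped from the left-hand side, which yields the desired estimate \eqref{eq:resid-estim}.

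There is essentially no obstacle here: the argument uses only the variational characterization of $x_{k+1}^\delta$ and the positivity of the penalty weight $\alpha$. The restriction $k < k_*^\delta$ does not enter the short argument itself; it simply delimits the range of indices for which the lemma is invoked in later sections (where one still needs the iterates to lie in $B_\rho(x_0)$ so that assumptions (A1)--(A2) apply). Neither the tangential cone condition (A2) nor any smallness requirement on $\delta_{max}$ is needed for this particular monotonicity of the residual.
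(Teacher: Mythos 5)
Your argument is correct and coincides with the paper's own proof, which establishes \eqref{eq:resid-estim} via the chain $\| F_{[k]}(x_{k+1}^\delta) - y_{[k]}^\delta \|^2 \le J_k(x_{k+1}^\delta) \le J_k(x_k^\delta) = \| F_{[k]}(x_{k}^\delta) - y_{[k]}^\delta \|^2$, i.e.\ exactly the minimizer property plus dropping the nonnegative penalty term. Your added remarks on the role of $k < k_*^\delta$ and the non-use of (A2) are accurate.
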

\begin{proof}
The inequality in \eqref{eq:resid-estim} is a direct consequence of
$$
\| F_{[k]}(x_{k+1}^\delta) - y_{[k]}^\delta \|^2 \ \leq \ J_k(x_{k+1}^\delta)
\ \leq \ J_k(x_k^\delta)  \ \leq \ \| F_{[k]}(x_{k}^\delta) - y_{[k]}^\delta \|^2
\, , \qquad k < k_*^\delta \, .
$$
\end{proof}

The following lemma is an important auxiliary result, which will be used
to prove a monotony property of the \textsc{iTK} iteration.

\begin{lemma} \label{lem:monot-aux}
Let $x_k^\delta$ and $\alpha$ be defined by \eqref{eq:itk} and
\eqref{eq:def-alp-tau} respectively. Moreover, assume that
(A1) - (A3) hold true.
If $x_{k+1}^\delta \in B_\rho(x_0)$ for some $k \in \N$, then
\begin{equation} \label{eq:monot-aux}
\| x_{k+1}^\delta  - x^* \|^2 - \| x_k^\delta - x^* \|^2  \le
   \frac{2}{\alpha} \norm{ F_{[k]}(x_{k+1}^\delta) - y_{[k]}^\delta }
   \Big[ (\eta-1) \norm{ F_{[k]}(x_{k+1}^\delta) - y_{[k]}^\delta }
         + (1+\eta)\delta_{[k]} \Big] \, .
\end{equation}
\end{lemma}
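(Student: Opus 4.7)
The plan is to expand the squared distance difference using the iteration formula for $x_{k+1}^\delta - x_k^\delta$, and then use the tangential cone condition (A2) together with the data noise bound from (A3) and \eqref{eq:noisy-i} to estimate the resulting inner product.

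First, I would use the standard identity
\[
 \|x_{k+1}^\delta - x^*\|^2 - \|x_k^\delta - x^*\|^2
 \;=\; 2\langle x_{k+1}^\delta - x_k^\delta,\ x_{k+1}^\delta - x^*\rangle
      - \|x_{k+1}^\delta - x_k^\delta\|^2,
\]
and then substitute the iteration formula \eqref{eq:itk}, so that
$x_{k+1}^\delta - x_k^\delta = -\alpha^{-1} F_{[k]}'(x_{k+1}^\delta)^* r_{k+1}$
with $r_{k+1} := F_{[k]}(x_{k+1}^\delta) - y_{[k]}^\delta$. Dropping the nonpositive term $-\|x_{k+1}^\delta - x_k^\delta\|^2$ and moving the adjoint across yields
\[
 \|x_{k+1}^\delta - x^*\|^2 - \|x_k^\delta - x^*\|^2
 \;\le\; -\frac{2}{\alpha}\,\bigl\langle r_{k+1},\ F_{[k]}'(x_{k+1}^\delta)(x_{k+1}^\delta - x^*)\bigr\rangle .
\]

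Next I would apply the tangential cone condition (A2) with $x=x^*$ and $\bar x = x_{k+1}^\delta$ (both lie in $B_\rho(x_0)$ by hypothesis and by (A3)), which gives
\[
 F_{[k]}'(x_{k+1}^\delta)(x_{k+1}^\delta - x^*)
   \;=\; \bigl(F_{[k]}(x_{k+1}^\delta)-F_{[k]}(x^*)\bigr) + e,
 \qquad \|e\| \le \eta\,\|F_{[k]}(x_{k+1}^\delta)-F_{[k]}(x^*)\|.
\]
Using (A3) we have $F_{[k]}(x^*)=y_{[k]}$, so $F_{[k]}(x_{k+1}^\delta)-F_{[k]}(x^*) = r_{k+1} + (y_{[k]}^\delta - y_{[k]})$, and by \eqref{eq:noisy-i} the last term has norm at most $\delta_{[k]}$, whence $\|F_{[k]}(x_{k+1}^\delta)-F_{[k]}(x^*)\| \le \|r_{k+1}\| + \delta_{[k]}$.

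Substituting these two identities back into the inner product above, one obtains
\[
 -\bigl\langle r_{k+1},\ F_{[k]}'(x_{k+1}^\delta)(x_{k+1}^\delta - x^*)\bigr\rangle
   \;=\; -\|r_{k+1}\|^2 + \bigl\langle r_{k+1},\ y_{[k]}^\delta - y_{[k]} - e\bigr\rangle,
\]
and a straightforward application of the Cauchy--Schwarz inequality together with the two bounds just derived yields
\[
 -\bigl\langle r_{k+1},\ F_{[k]}'(x_{k+1}^\delta)(x_{k+1}^\delta - x^*)\bigr\rangle
   \;\le\; \|r_{k+1}\|\bigl[(\eta-1)\|r_{k+1}\| + (1+\eta)\delta_{[k]}\bigr].
\]
Combining this with the earlier inequality delivers \eqref{eq:monot-aux}.

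No real obstacle is expected: the proof is essentially bookkeeping. The only delicate point is to apply the tangential cone condition in the correct direction, namely expanding $F_{[k]}'(x_{k+1}^\delta)(x_{k+1}^\delta - x^*)$ around $\bar x = x_{k+1}^\delta$ rather than around $x^*$, so that the linearization error can be controlled by $\eta(\|r_{k+1}\| + \delta_{[k]})$ and then absorbed to produce the factor $(\eta-1)$ in front of $\|r_{k+1}\|^2$. Note that the constraint $\alpha > \frac{16}{3}(\delta_{\max}/\rho)^2$ from \eqref{eq:def-alp-tau} is not used here; it will only enter subsequent lemmas showing that the iterates stay inside $B_\rho(x_0)$.
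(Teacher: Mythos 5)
Your proof is correct and follows essentially the same route as the paper: the identity $\|a\|^2-\|b\|^2 = 2\langle a, a-b\rangle - \|a-b\|^2$, substitution of the iteration formula, the tangential cone condition linearized at $\bar x = x_{k+1}^\delta$, and the noise bound $\|y_{[k]}^\delta - y_{[k]}\|\le\delta_{[k]}$ combine in exactly the way the paper does it. The only blemish is a harmless sign slip in the intermediate term ($y_{[k]}^\delta - y_{[k]}$ should be $y_{[k]} - y_{[k]}^\delta$ after moving it across), which is irrelevant once Cauchy--Schwarz is applied; your remark that the lower bound on $\alpha$ from \eqref{eq:def-alp-tau} is not needed at this stage is also accurate.
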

\begin{proof}
From (\ref{eq:itk}) it follows that
\begin{align*}
\norm{x_{k+1}^\delta - x^*}^2 & - \norm{x_k^\delta - x^*}^2
\\[1ex]
&\quad \le 2\, \langle x_{k+1}^\delta - x^* , \ x_{k+1}^\delta -x_k^\delta \rangle
\\
& \quad = \frac{2}{\alpha} \,
  \langle x_{k+1}^\delta - x^* , \ F'_{[k]}(x_{k+1}^\delta)^*
         (y_{[k]}^\delta - F_{[k]}(x_{k+1}^\delta) ) \rangle
\\
& \quad = \frac{2}{\alpha} \,
  \langle y_{[k]}^\delta - F_{[k]}(x_{k+1}^\delta) , \
          F'_{[k]}(x_{k+1}^\delta) (x_{k+1}^\delta - x^*)
          \pm F_{[k]}(x_{k+1}^\delta) \pm F_{[k]}(x^*) \rangle
\\
 & \quad \leq \frac{2}{\alpha} \,
   \Big( \langle F_{[k]}(x_{k+1}^\delta) - y_{[k]}^\delta, \
         F_{[k]}(x_{k+1}^\delta) - F_{[k]}(x^*) -
         F'_{[k]}(x_{k+1}^\delta) (x_{k+1}^\delta - x^*) \rangle
\\
& \qquad\qquad  + 2
  \langle F_{[k]}(x_{k+1}^\delta) - y_{[k]}^\delta, \
          F_{[k]}(x^*) - F_{[k]}(x_{k+1}^\delta) \pm y_{[k]}^\delta \rangle
          \Big) \, .
\end{align*}
Now, applying the Cauchy-Schwarz inequality and \eqref{eq:a-tcc} with
$x = x^* \in B_{\rho/4}(x_0)$, $\bar x = x_{k+1}^\delta \in B_\rho(x_0)$,
leads to
\begin{align*}
\norm{ x_{k+1}^\delta - x^*}^2 & - \norm{x_k^\delta - x^*}^2
  \le \frac{2}{\alpha} \,
  \norm{ F_{[k]}(x_{k+1}^\delta) - y_{[k]}^\delta }
  \Big( \eta \norm{ F_{[k]}(x_{k+1}^\delta) - y_{[k]} \pm y_{[k]}^\delta }
\\
& \qquad\qquad\qquad\qquad\qquad\qquad\qquad\qquad\qquad
  - \norm{ F_{[k]}(x_{k+1}^\delta) - y_{[k]}^\delta }
  + \norm{ y_{[k]} - y_{[k]}^\delta } \Big) \, ,
\end{align*}
and \eqref{eq:monot-aux} follows from this inequality together with
\eqref{eq:noisy-i}.
\end{proof}

It is worth noticing that the proof of Lemma~\ref{lem:monot-aux} requires
an assumption on $x_{k+1}^\delta$, namely that $x_{k+1}^\delta \in B_\rho(x_0)$.
In the next lemma we make sure that this assumption is satisfied.

\begin{lemma} \label{lem:monot-aux2}
Let $x_k^\delta$ and $\alpha$ be defined by \eqref{eq:itk} and
\eqref{eq:def-alp-tau} respectively. Moreover, assume that (A1),
(A3) hold true.
If $x_k^\delta \in B_{\rho/4}(x^*)$ for some $k \in \N$, then
$x_{k+1}^\delta \in B_\rho(x_0)$.
\end{lemma}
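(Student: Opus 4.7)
The plan is to exploit the fact that $x_{k+1}^\delta$ is a global minimizer of the Tikhonov functional $J_k$ defined in \eqref{eq:def-Jk}, and compare $J_k(x_{k+1}^\delta)$ not with $J_k(x_k^\delta)$ (as was done in Lemma~\ref{lem:resid-estim}) but with $J_k(x^*)$. This comparison point is the key trick, because the residual of $F_{[k]}$ at $x^*$ is controlled directly by the noise level via (A3) and \eqref{eq:noisy-i}, and the penalty term $\|x^*-x_k^\delta\|$ is controlled by the hypothesis.

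First I would write
\[
  \alpha \, \| x_{k+1}^\delta - x_k^\delta \|^2
   \ \le \ J_k(x_{k+1}^\delta) \ \le \ J_k(x^*)
   \ = \ \| F_{[k]}(x^*) - y_{[k]}^\delta \|^2
        + \alpha \, \| x^* - x_k^\delta \|^2 \, ,
\]
and then bound the two terms on the right: the first by $\delta_{[k]}^2 \le \delta_{max}^2$, using that $F_{[k]}(x^*) = y_{[k]}$ together with \eqref{eq:noisy-i}; the second by $\alpha \rho^2/16$, using the hypothesis $x_k^\delta \in B_{\rho/4}(x^*)$. Dividing by $\alpha$ yields
\[
  \| x_{k+1}^\delta - x_k^\delta \|^2
   \ \le \ \frac{\delta_{max}^2}{\alpha} + \frac{\rho^2}{16} \, .
\]

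Next I would invoke the standing assumption \eqref{eq:def-alp-tau}, which rearranges to $\delta_{max}^2/\alpha < 3\rho^2/16$. Plugging this in gives $\|x_{k+1}^\delta - x_k^\delta\|^2 < \rho^2/4$, hence $\|x_{k+1}^\delta - x_k^\delta\| < \rho/2$. Finally, the conclusion follows from the triangle inequality combined with (A3):
\[
  \| x_{k+1}^\delta - x_0 \|
   \ \le \ \| x_{k+1}^\delta - x_k^\delta \|
         + \| x_k^\delta - x^* \|
         + \| x^* - x_0 \|
   \ < \ \tfrac{\rho}{2} + \tfrac{\rho}{4} + \tfrac{\rho}{4} \ = \ \rho \, .
\]

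The main obstacle is really a matter of choosing the right comparison point. A naive use of $J_k(x_{k+1}^\delta) \le J_k(x_k^\delta)$ would force one to bound $\|F_{[k]}(x_k^\delta)-y_{[k]}^\delta\|$, which via (A1) introduces the Lipschitz constant $M$ and would require a stronger hypothesis on $\alpha$ involving $M$. Comparing with $x^*$ instead sidesteps this entirely and explains precisely why the constant $16/3$ in \eqref{eq:def-alp-tau} is what it is: it is exactly the constant needed to convert the budget $\rho^2/4$ into $\rho^2/16 + 3\rho^2/16$. Note that assumption (A2) is not needed here; only (A1) (through the existence of the minimizer, by Lemma~\ref{lem:ex-min-tikh}) and (A3) play a role.
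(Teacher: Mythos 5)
Your proof is correct and follows essentially the same route as the paper: compare $J_k(x_{k+1}^\delta)$ with $J_k(x^*)$, bound the residual term by the noise level (via (A3) and \eqref{eq:noisy-i}) and the penalty term by $\alpha\rho^2/16$, invoke \eqref{eq:def-alp-tau}, and finish with the triangle inequality. If anything, your middle step is slightly cleaner than the paper's: the paper splits the square root as $\delta_{[k]}/\sqrt{\alpha}+\rho/4\le\rho/2$, which strictly speaking would need $\alpha\ge 16\,\delta_{[k]}^2/\rho^2$, whereas bounding $\|x_{k+1}^\delta-x_k^\delta\|^2$ directly by $\delta_{max}^2/\alpha+\rho^2/16<\rho^2/4$ uses exactly what the $16/3$-condition in \eqref{eq:def-alp-tau} provides.
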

\begin{proof}
It follows from the definition of $x_{k+1}^\delta$ that
$$
\alpha \norm{x_{k+1}^\delta - x_k^\delta}^2
\le J_k(x_{k+1}^\delta) \le J_k(x^*)
\le \norm{ y_{[k]} - y_{[k]}^\delta }^2 + \alpha (\rho/4)^2 \, .
$$
From this inequality and (\ref{eq:def-alp-tau}) we obtain
$\norm{x_{k+1}^\delta - x_k^\delta} \le \delta_{[k]}(\sqrt{\alpha})^{-1}
+ \rho/4 \le \rho/2$. Therefore, it follows that
$$
\norm{x_{k+1}^\delta - x_0}
\le \norm{x_{k+1}^\delta - x_k^\delta} + \norm{x_k^\delta - x_0}
\le \rho/2 + \rho/2 \, ,
$$
completing the proof.
\end{proof}

Our next goal is to prove a monotony property, known to be satisfied
by other iterative regularization methods, e.g., by the Landweber
\cite{EngHanNeu96}, the steepest descent \cite{Sch96}, the
\textsc{LK} \cite{KowSch02} method, the \textsc{l-LK} method \cite{HLS07},
and the \textsc{l-SDK} method \cite{CHLS08}.

\begin{propo}[Monotonicity] \label{prop:monot}
Under the assumptions of Lemma~\ref{lem:monot-aux}, for all $k < k_*^\delta$
the iterates $x_k^\delta$ remain in $B_{\rho/4}(x^*) \subset B_{\rho}(x_0)$
and satisfy \eqref{eq:monot-aux}. Moreover,
\begin{equation} \label{eq:itk-monot}
\| x_{k+1}^\delta - x^* \|^2 \ \le \ \| x_k^\delta - x^* \|^2 \, ,
\quad  k < k_*^\delta \, .
\end{equation}
\end{propo}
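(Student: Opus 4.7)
The plan is to prove both assertions simultaneously by induction on $k$, using the stronger induction hypothesis $x_k^\delta \in B_{\rho/4}(x^*)$. This is stronger than what Lemma~\ref{lem:monot-aux} asks for as input, but it is exactly what Lemma~\ref{lem:monot-aux2} needs in order to deliver $x_{k+1}^\delta \in B_\rho(x_0)$, which is in turn the hypothesis of Lemma~\ref{lem:monot-aux}. The monotonicity estimate \eqref{eq:itk-monot} will then close the induction, since $\|x_{k+1}^\delta-x^*\| \le \|x_k^\delta-x^*\|\le \rho/4$.

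The base case $k=0$ is immediate: $x_0^\delta=x_0$ and, by (A3), $x^*\in B_{\rho/4}(x_0)$, so $x_0\in B_{\rho/4}(x^*)$. For the inductive step, suppose $x_k^\delta\in B_{\rho/4}(x^*)$ for some $k<k_*^\delta$. Lemma~\ref{lem:monot-aux2} yields $x_{k+1}^\delta \in B_\rho(x_0)$, so that Lemma~\ref{lem:monot-aux} applies and we obtain \eqref{eq:monot-aux}. It remains to show the bracket on the right-hand side of \eqref{eq:monot-aux} is non-positive. Here the stopping index intervenes: because $k_*^\delta$ is the first multiple of $N$ starting a cycle in which some residual falls below threshold, any $k<k_*^\delta$ lies in an earlier cycle where all $N$ residuals have stayed above threshold. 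In particular,
$$
\| F_{[k]}(x_{k+1}^\delta) - y_{[k]}^\delta \| \;>\; \tau\,\delta_{[k]}.
$$
Combined with the choice $\tau>(1+\eta)/(1-\eta)$ from \eqref{eq:def-alp-tau}, this gives
$$
(\eta-1)\,\|F_{[k]}(x_{k+1}^\delta)-y_{[k]}^\delta\| + (1+\eta)\,\delta_{[k]}
\;<\; \bigl[(1+\eta)-\tau(1-\eta)\bigr]\,\delta_{[k]} \;\le\; 0.
$$
Hence \eqref{eq:monot-aux} forces $\|x_{k+1}^\delta-x^*\|^2\le \|x_k^\delta-x^*\|^2$, which yields both \eqref{eq:itk-monot} and $x_{k+1}^\delta\in B_{\rho/4}(x^*)$, completing the induction.

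The only subtle point is the bookkeeping linking the global stopping index $k_*^\delta$ (defined cycle-wise) to the per-step residual inequality $\|F_{[k]}(x_{k+1}^\delta)-y_{[k]}^\delta\|>\tau\delta_{[k]}$ that is needed to sign the bracket. Once that translation is written out carefully, the rest is algebra driven by the constants in \eqref{eq:def-alp-tau}; no further assumptions beyond (A1)--(A3) are invoked, and the noise-free case $\delta_{[k]}=0$ is covered as well, since then the bracket reduces to $(\eta-1)\|F_{[k]}(x_{k+1}^\delta)-y_{[k]}^\delta\|\le 0$ automatically.
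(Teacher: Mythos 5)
Your proof is correct and follows essentially the same route as the paper: induction with the strengthened hypothesis $x_k^\delta\in B_{\rho/4}(x^*)$, chaining Lemma~\ref{lem:monot-aux2} into Lemma~\ref{lem:monot-aux}, and signing the bracket via the residual bound $\|F_{[k]}(x_{k+1}^\delta)-y_{[k]}^\delta\|>\tau\delta_{[k]}$ from \eqref{eq:def-discrep} together with the choice of $\tau$ in \eqref{eq:def-alp-tau}. The paper only writes out the step $k=0$ and invokes induction; your version makes the cycle-wise bookkeeping behind the per-step residual inequality explicit, which is a welcome clarification but not a different argument.
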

\begin{proof}
From (A3) it follows that $x_0 \in B_{\rho/4}(x^*)$.
Moreover, Lemma~\ref{lem:monot-aux2} guarantees that $x_1 \in B_{\rho}(x^*)$.
Therefore, it follows from Lemma~\ref{lem:monot-aux} that
\eqref{eq:monot-aux} holds for $k=0$. Then we conclude from
\eqref{eq:monot-aux} and \eqref{eq:def-discrep} that
\begin{align*}
\| x_1^\delta - x^* \|^2 - \| x_0^\delta - x^* \|^2 \ \le \
   \frac{2}{\alpha} \norm{ F_0(x_1^\delta) - y_0^\delta }
   \delta_0 \Big[ \tau (\eta-1) + (1+\eta) \Big] \, .
\end{align*}
Thus, it follows from \eqref{eq:def-alp-tau} that \eqref{eq:itk-monot}
holds for $k=0$. In particular we have $x_1 \in B_{\rho/4}(x^*)$. The
proof follows now using an inductive argument.
\end{proof}

In the next two sections we provide a complete convergence
analysis for the \textsc{iTK} iteration (see Theorems~\ref{th:exact}
and~\ref{th:noisy-nl} below).

\section{iTK Method: Convergence for exact data}
\label{sec:conv-exact}

Throughout this section, we assume that (A1) - (A3) hold true and that
$x_k^\delta$, $\alpha$ and $\tau$ are defined by \eqref{eq:itk} and
\eqref{eq:def-alp-tau}.
Our main goal in this section is to prove convergence of the \textsc{iTK}
iteration for $\delta_i = 0$, $i=0,\dots,N-1$. For exact data
$y = (y_0, \dots, y_{N-1})$, the iterates in \eqref{eq:itk} are denoted
by $x_k$ to contrast with $x_k^\delta$
in the noisy data case.

\begin{lemma} \label{lem:min-norm}
There exists an $x_0$-minimal norm solution of \eqref{eq:inv-probl} in
$B_{\rho/4}(x_0)$, i.e., a solution $x^\dag$ of \eqref{eq:inv-probl} such that
$\norm{ x^\dag - x_0 } = \inf \{ \norm{x - x_0} : x \in B_{\rho/4}(x_0)$ and
$F(x) = y \}$.
Moreover, $x^\dag $ is the only solution of \eqref{eq:inv-probl} in
$B_{\rho/4}(x_0) \cap \big( x_0 + \ker(F '( x^\dag ))^\perp \big) $.
\end{lemma}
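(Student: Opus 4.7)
My plan is to establish existence via a standard weak compactness argument, and then exploit the tangential cone condition (A2) in two complementary ways: first to upgrade a minimum norm solution into one satisfying the orthogonality condition, and second to rule out any other candidate inside the intersection.

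For existence, I would set $\mathcal{M} := \{x \in B_{\rho/4}(x_0) : F(x) = y\}$, which is nonempty by (A3), and take a minimizing sequence $\{x_n\} \subset \mathcal{M}$. The closed ball $B_{\rho/4}(x_0)$ is weakly compact, and it lies in the weakly closed domain by (A1), so a subsequence converges weakly to some $x^\dag \in B_{\rho/4}(x_0)$. Weak sequential continuity of each $F_i$ (and hence of $F$) gives $F(x^\dag) = y$, while weak lower semicontinuity of the norm gives $\|x^\dag - x_0\| = \inf_\mathcal{M}\|\cdot - x_0\|$.

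The key step is to show $x^\dag - x_0 \in \ker(F'(x^\dag))^\perp$. Here I would fix $h \in \ker(F'(x^\dag))$ and perturb along $-th$ for small $t>0$. For $t$ small enough, $x^\dag - th$ lies in $B_\rho(x_0) \subset \bigcap_i D_i$, so the componentwise TCC (A2), assembled via $\|F(x)-F(\bar x)-F'(\bar x)(x-\bar x)\|^2 = \sum_i \|F_i(x)-F_i(\bar x)-F_i'(\bar x)(x-\bar x)\|^2 \le \eta^2\|F(x)-F(\bar x)\|^2$, applies. Using $F'(x^\dag)h = 0$, this gives $\|F(x^\dag - th) - y\| \le \eta \|F(x^\dag - th) - y\|$, and since $\eta < 1$ this forces $F(x^\dag - th) = y$. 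Thus $x^\dag - th \in \mathcal{M}$ for all sufficiently small $t > 0$, and the minimality of $\|x^\dag - x_0\|$ yields $-2t\langle x^\dag-x_0,h\rangle + t^2\|h\|^2 \ge 0$; dividing by $t$ and letting $t \to 0^+$ gives $\langle x^\dag - x_0, h\rangle \le 0$, and replacing $h$ by $-h$ yields equality. Hence $x^\dag - x_0 \perp \ker(F'(x^\dag))$.

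For uniqueness in the intersection, let $\tilde x \in B_{\rho/4}(x_0) \cap (x_0 + \ker(F'(x^\dag))^\perp)$ also satisfy $F(\tilde x) = y$. Applying (A2) with $x = \tilde x$ and $\bar x = x^\dag$ gives $\|F'(x^\dag)(\tilde x - x^\dag)\| \le \eta\|F(\tilde x) - F(x^\dag)\| = 0$, so $\tilde x - x^\dag \in \ker(F'(x^\dag))$; but $\tilde x - x^\dag = (\tilde x - x_0) - (x^\dag - x_0)$ lies in $\ker(F'(x^\dag))^\perp$ by assumption and by the preceding step, so $\tilde x = x^\dag$.

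The main obstacle I expect is the second step: recognizing that the TCC, when combined with a perturbation along $\ker(F'(x^\dag))$, self-annihilates the residual and thereby converts the minimum norm property into a genuine first-order variational condition. The only routine technicalities are checking that $x^\dag - th$ stays inside $B_\rho(x_0)$ (so (A2) is applicable) and inside $B_{\rho/4}(x_0)$ (so it is an admissible competitor for the minimum norm infimum), both of which hold for $t$ sufficiently small since $x^\dag \in B_{\rho/4}(x_0)$.
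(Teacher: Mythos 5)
Your proof is correct and is essentially the standard argument behind the result the paper merely cites (Proposition~2.1 of Hanke--Neubauer--Scherzer, via \cite{HanNeuSch95, KalNeuSch08}): existence by weak compactness of the ball together with weak sequential continuity of the $F_i$, orthogonality of $x^\dag - x_0$ to $\ker F'(x^\dag)$ via the key observation that the tangential cone condition forces perturbations along the kernel to remain solutions, and uniqueness in the affine slice from $\ker F'(x^\dag) \cap \ker(F'(x^\dag))^\perp = \{0\}$. The only point worth tightening is the admissibility of the competitor $x^\dag - th$ when $x^\dag$ lies on the boundary of $B_{\rho/4}(x_0)$; this is harmless because $x^\dag - th$ re-enters the ball precisely when $\langle x^\dag - x_0, h\rangle > 0$, which is exactly the case in which the contradiction with minimality is needed.
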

\begin{proof}
Lemma \ref{lem:min-norm} is a consequence of
\cite[Proposition~2.1]{HanNeuSch95}. For a detailed proof we refer the
reader to \cite{KalNeuSch08}.
\end{proof}

Throughout the rest of this article,  $x^\dag$ denotes the $x_0$-minimal
norm solution of \eqref{eq:inv-probl}. We define $e_k := x^\dag - x_k$.
From Proposition \ref{prop:monot} it follows that $\norm {e_k}$ is monotone
non increasing.

Notice that Proposition \ref{prop:monot} guarantees that \eqref{eq:monot-aux}
holds for all $k \in \N$. Since the data is exact, \eqref{eq:monot-aux}
can be rewritten as
$\| x_{k+1}  - x^* \|^2 - \| x_k - x^* \|^2  \le
2 \alpha^{-1} (\eta-1) \norm{ F_{[k]}(x_{k+1}) - y_{[k]}}^2$.
By summing over all $k$, this leads to
\begin{equation} \label{eq:sum-1}
\summ_{k=0}^\infty \norm{ F_{[k]}(x_{k+1}) - y_{[k]} }^2 \ \leq \
          \frac{\alpha}{2 (1-\eta)} \norm{x_0 - x^\dag}^2 \ < \ \infty \, ,
\end{equation}
Equation \eqref{eq:sum-1} and the monotony of $\norm {e_k}$ are the main
arguments in the following proof of the convergence of the \textsc{iTK}
iteration.

\begin{theorem}[Convergence for exact data] \label{th:exact}
For exact data, the iteration $(x_k)$ converges to a solution of
\eqref{eq:inv-probl}, as $k \to \infty$. Moreover, if
\begin{equation} \label{eq:kern-cond}
\nr( \F'(x^\dag) ) \subseteq \nr( \F(x) )
\quad \text{ for all } x \in B_\rho(x_0) \, ,\ i=0,\dots,N-1 \, ,
\end{equation}
then $x_k \to x^\dag$.
\end{theorem}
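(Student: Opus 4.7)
The plan is to follow the classical convergence proof for Landweber-Kaczmarz-type iterations \cite{HLS07,CHLS08}, suitably adapted to the implicit iteration \eqref{eq:itk}. From Proposition~\ref{prop:monot} the sequence $\norm{e_k} := \norm{x_k - x^\dag}$ is monotone non-increasing and hence convergent; the same monotonicity estimate, summed as in \eqref{eq:sum-1}, yields $\summ_{k=0}^\infty \norm{F_{[k]}(x_{k+1}) - y_{[k]}}^2 < \infty$, so the residuals tend to zero. The bulk of the argument is to prove that $(x_k)$ is Cauchy; continuity of $F_i$ together with the cycling of $[k]$ will show the limit is a solution, and the kernel condition will identify the limit with $x^\dag$.

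For the Cauchy property, given $n \le m$ I would pick $l \in \{n,\dots,m\}$ minimizing $\norm{F_{[j]}(x_{j+1}) - y_{[j]}}$ over this range, and start from $\norm{x_m - x_n}^2 \le 2\norm{x_m - x_l}^2 + 2\norm{x_n - x_l}^2$ together with the polarization identity $\norm{x_j - x_l}^2 = \norm{e_j}^2 - \norm{e_l}^2 - 2\langle e_l,\, x_l - x_j\rangle$. The energy differences vanish as $n \to \infty$ because $\norm{e_k}$ converges; for the cross term I would telescope $x_l - x_j$ via \eqref{eq:itk}, apply Cauchy-Schwarz, and reduce the task to bounding $\norm{F'_{[k]}(x_{k+1})\, e_l}$. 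Invoking the tangential cone condition (A2) at $\bar x = x_{k+1}$ twice (once with $x = x_l$, once with $x = x^\dag$), and using $F_{[k]}(x^\dag) = y_{[k]}$, one controls this by a constant times $\norm{F_{[k]}(x_l) - y_{[k]}} + \norm{F_{[k]}(x_{k+1}) - y_{[k]}}$; the minimality of the $l$-residual together with \eqref{eq:sum-1} then drives the cross term to zero.

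Let $\bar x := \lim_k x_k \in B_{\rho/4}(x^*)$. Since $[k]$ cycles through $\{0,\dots,N-1\}$ and $\norm{F_{[k]}(x_{k+1}) - y_{[k]}} \to 0$, along the subsequence with $[k] = i$ the continuity of $F_i$ inherited from (A1) yields $F_i(\bar x) = y_i$; hence $\bar x$ solves \eqref{eq:inv-probl}. For the second assertion, \eqref{eq:itk} gives $x_{k+1} - x_k \in \mathcal{R}(F'_{[k]}(x_{k+1})^*) \subseteq \mathcal{N}(F'_{[k]}(x_{k+1}))^\perp$; under \eqref{eq:kern-cond} one has $\mathcal{N}(F'_{[k]}(x^\dag)) \subseteq \mathcal{N}(F'_{[k]}(x_{k+1}))$, so $x_{k+1} - x_k \perp \mathcal{N}(F'_{[k]}(x^\dag)) \supseteq \mathcal{N}(F'(x^\dag))$. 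Telescoping and taking the limit, $\bar x - x_0 \in \mathcal{N}(F'(x^\dag))^\perp$, and the uniqueness clause of Lemma~\ref{lem:min-norm} forces $\bar x = x^\dag$.

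The main obstacle is the Cauchy estimate: because the derivative in \eqref{eq:itk} is evaluated at the implicit point $x_{k+1}$ rather than at $x_k$ or at $x^\dag$, the tangential cone condition must be applied at the moving base $\bar x = x_{k+1}$ to convert $\norm{F'_{[k]}(x_{k+1})\, e_l}$ into a sum of residuals; in addition, the cyclic Kaczmarz index $[k]$ forces $m - n$ to span at least one full cycle so that the minimizing index $l$ effectively captures the smallness of residuals across all $N$ component equations.
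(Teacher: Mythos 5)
Your overall architecture coincides with the paper's: monotonicity of $\norm{e_k}$ together with the summability estimate \eqref{eq:sum-1}, a Cauchy argument driven by inner products of the form $\langle e_l, x_l - x_j\rangle$, the tangential cone condition (A2) applied at the implicit point $x_{k+1}$ to convert $\norm{F'_{[k]}(x_{k+1})(\cdot)}$ into residuals (this is exactly the paper's \eqref{eq:est-f1}--\eqref{eq:est-f2}), and the kernel condition plus the uniqueness clause of Lemma~\ref{lem:min-norm} to identify the limit with $x^\dag$. Your final two paragraphs (limit is a solution; $x_k - x_0 \in \mathcal{N}(F'(x^\dag))^\perp$) are sound and match the argument the paper inherits from \cite[Theorem~3.3]{CHLS08}.

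The genuine gap is in the selection of the pivot index $l$. You pick a single $l$ minimizing the single residual $\norm{F_{[j]}(x_{j+1}) - y_{[j]}}$, and your cross-term estimate then requires a bound on $\norm{F_{[i]}(x_l) - y_{[i]}}$ for \emph{every} equation index $[i]$ appearing between $k$ and $n$, all evaluated at the one iterate $x_l$. Minimality of a single residual controls only the one equation treated in the step that produced $x_l$; the other $N-1$ residuals at $x_l$ are not controlled by it, and \eqref{eq:sum-1} cannot rescue this, since it sums the residual of equation $[i]$ only at the iterates $x_{i+1}$ generated when that equation is actually treated. Requiring $m-n$ to span a full cycle does not repair this by itself. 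The necessary device --- which is precisely what the paper uses --- is to select a whole reference cycle $n_0$ minimizing the cycle sum $\gamma(n_0) = \sum_{j=0}^{N-1}\norm{F_j(x_{n_0N+j+1}) - y_j}$, to insert the intermediate iterate $x_{i^*+1}$ produced within that cycle when equation $i_1$ was treated (so that $\norm{y_{i_1} - F_{i_1}(x_{i^*+1})} \le \gamma$ is a genuine, controlled residual, as in \eqref{eq:est-f2}), and to absorb the remaining displacement $x_{i^*+1} - x_n$ by telescoping the iteration inside the cycle and invoking the derivative bound \eqref{eq:a-dfb}; this is the source of the extra term $\alpha^{-1}M^2\gamma$ in \eqref{eq:est-f3} and of the constant $c = (N+2)\alpha^{-1}(1+\eta) + NM^2\alpha^{-1}$. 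Without this three-way splitting of $e_n$, the cross term in your Cauchy estimate does not tend to zero as claimed.
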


\begin{proof}
We have already observed that $\norm{e_k}$ decreases monotonically. Therefore,
$\norm{e_k}$ converges to some $\epsilon \geq 0$.
In the following we show that $e_k$ is in fact a Cauchy sequence. This is done
similarly as in the proof of \cite[Theorem~3.3]{CHLS08}. The crucial difference is
the fact that the term $| \langle e_n - e_k , e_n \rangle |$ is here estimated by
\begin{eqnarray}
| \langle e_n - e_k , e_n \rangle |
 & \leq & \summ_{i=k}^{n-1} \alpha^{-1}
        \| \F_{i_1}(x_{i+1}) - y_{i_1} \| \, \| \F_{i_1}'(x_{i+1})
        (x^\dag - x_{i+1}) \| \nonumber \\[-1.5ex]
 &      & \quad + \summ_{i=k}^{l-1} \alpha^{-1} | \F_{i_1}(x_{i+1}) - y_{i_1} \|
        \, \| \F_{i_1}'(x_{i+1}) (x_{i+1} - x_{i^*+1}) \|  \nonumber \\[-1.5ex]
 &      & \qquad \ \ \ + \summ_{i=k}^{l-1} \alpha^{-1}
        \| \F_{i_1}(x_{i+1}) - y_{i_1} \| \,
        \| \F_{i_1}'(x_{i+1}) (x_{i^*+1} - x_{n}) \| \, . \label{eq:est-f0}
\end{eqnarray}
Then, it follows from \eqref{eq:a-tcc} that
\begin{eqnarray}
\| \F_{i_1}'(x_{i+1}) (x^\dag - x_{i+1}) \| & \!\!\! \le & \!\!\!
   (1 + \eta) \| \F_{i_1}(x_{i+1}) - y_{i_1} \| \label{eq:est-f1} \\
\| \F_{i_1}'(x_{i+1}) (x_{i+1} - x_{i^*+1}) \| & \!\!\! \le & \!\!\!
   (1 + \eta) \big( \| \F_{i_1}(x_{i+1}) - y_{i_1} \| +
              \| y_{i_1} - \F_{i_1}(x_{i^*+1}) \| \big) \, . \label{eq:est-f2}
\end{eqnarray}
Moreover, from the definition of the iterated Tikhonov method and
and \eqref{eq:a-dfb} it follows that
\begin{eqnarray}
\| \F_{i_1}'(x_{i+1}) (x_{i^*+1} - x_{n}) \| & \!\!\! \le & \!\!\!
   \alpha^{-1} M^2 \summ_{j = 0}^{N-1}
   \norm{ F_{j}(x_{n_0N+j+1}) - y_j }
   \ \le \ \alpha^{-1} M^2 \gamma \, , \label{eq:est-f3}
\end{eqnarray}
with $\gamma = \gamma(n_0) := \sum_{j=0}^{N-1} \norm{F_{j}(x_{n_0N+j+1}) - y_j}$.
Substituting \eqref{eq:est-f1}, \eqref{eq:est-f2}, \eqref{eq:est-f3} in
\eqref{eq:est-f0} leads to
\begin{align*}
 & | \langle e_n - e_k , e_n \rangle | \\
 & \le \summ_{i_0=k_0}^{n_0}  \summ_{i_1=0}^{N-1}
   \alpha^{-1} \norm{ \F_{i_1}(x_{i_0N + i_1 + 1}) - y_{i_1} } \,
   \Big( 2(1+\eta) \norm{ \F_{i_1}(x_{i_0N + i_1 + 1}) - y_{i_1} }
   + [(1+\eta) + \frac{M^2}{\alpha} ] \gamma \Big)
\end{align*}
(we used the fact that $\| y_{i_1} - \F_{i_1}(x_{i^*+1}) \| \le \gamma$)
and we finally obtain the estimate
\begin{eqnarray*}
| \langle e_n - e_k , e_n \rangle |
 & \le & c \, \summ_{i_0=k_0}^{n_0}
   \summ_{i_1=0}^{N-1} \norm{ \F_{i_1}(x_{i_0N + i_1 +1}) - y_{i_1} }^2
   = \ c \, \summ_{i=k_0}^{n-1} \norm{ \F_{[i]}(x_{i+1}) - y_{[i]} }^2
\end{eqnarray*}
with $c := (N+2) \alpha^{-1} (1 + \eta) + N M^2 \alpha^{-1}$.

The remaining of the argumentation (including the proof of the second assertion)
follows the lines of the proof of \cite[Theorem~3.3]{CHLS08}.
\end{proof}

\section{iTK Method: Convergence for noisy data}
\label{sec:conv-noisy}

Throughout this section, we assume that (A1) - (A3) hold true and that
$x_k^\delta$, $\alpha$ and $\tau$ are defined by \eqref{eq:itk},
and \eqref{eq:def-alp-tau}.
Our main goal in this section is to prove that $x_{k_*^\delta}^\delta$
converges to a solution of \eqref{eq:inv-probl} as $\delta \to 0$,
where $k_*^\delta$ is defined in \eqref{eq:def-discrep}.
Our first goal is to verify the finiteness of the stopping index
$k_*^\delta$.

\begin{propo} \label{prop:st-f}
Assume $\delta_{\rm min} := \min \{ \delta_0, \dots  \delta_{N-1} \} > 0$.
Then $k_*^\delta$ defined in \eqref{eq:def-discrep} is finite.
\end{propo}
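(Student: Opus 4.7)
The plan is a standard argument by contradiction using the monotonicity estimate \eqref{eq:monot-aux} established in Proposition~\ref{prop:monot}.

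Suppose $k_*^\delta = \infty$. Then the stopping criterion \eqref{eq:def-discrep} never triggers, so for every $k \in \N$ we have
$$\norm{F_{[k]}(x_{k+1}^\delta) - y_{[k]}^\delta} \ > \ \tau \delta_{[k]} \ \ge \ \tau \delta_{\min} \ > \ 0,$$
the last strict positivity being exactly the hypothesis $\delta_{\min} > 0$. Since Proposition~\ref{prop:monot} then applies for every $k$, the estimate \eqref{eq:monot-aux} holds for every $k$, and the iterates all lie in $B_{\rho/4}(x^*)$.

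Next I would substitute the bound $\delta_{[k]} < \tau^{-1}\norm{F_{[k]}(x_{k+1}^\delta) - y_{[k]}^\delta}$ into the bracket on the right-hand side of \eqref{eq:monot-aux}, obtaining
$$\| x_{k+1}^\delta  - x^* \|^2 - \| x_k^\delta - x^* \|^2 \ \le \ -\frac{2c}{\alpha}\,\norm{F_{[k]}(x_{k+1}^\delta) - y_{[k]}^\delta}^2,$$
where $c := (1-\eta) - (1+\eta)/\tau$. The choice \eqref{eq:def-alp-tau} of $\tau$ gives $\tau(1-\eta) > 1+\eta$, hence $c > 0$. Using $\norm{F_{[k]}(x_{k+1}^\delta) - y_{[k]}^\delta} > \tau \delta_{\min}$ once more yields
$$\| x_{k+1}^\delta  - x^* \|^2 - \| x_k^\delta - x^* \|^2 \ \le \ -\frac{2c\,\tau^2 \delta_{\min}^2}{\alpha}.$$

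Summing this telescoping inequality from $k=0$ to $k=K-1$ gives
$$\| x_K^\delta - x^* \|^2 \ \le \ \| x_0 - x^* \|^2 - \frac{2c\,\tau^2 \delta_{\min}^2}{\alpha} K,$$
whose right-hand side tends to $-\infty$ as $K \to \infty$. This contradicts the nonnegativity of the left-hand side and proves that $k_*^\delta < \infty$.

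There is no real obstacle here; the only points needing care are (i) verifying that $c > 0$ from the specific lower bound on $\tau$ in \eqref{eq:def-alp-tau}, and (ii) noting that Proposition~\ref{prop:monot} really does apply for all indices under the contradiction hypothesis, since $k_*^\delta = \infty$ means every $k$ satisfies $k < k_*^\delta$ and so the iterates stay in $B_{\rho/4}(x^*)$ where \eqref{eq:monot-aux} is valid.
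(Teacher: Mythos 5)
Your proof is correct and follows essentially the same route as the paper's: a contradiction argument that applies the monotonicity estimate \eqref{eq:monot-aux} for all $k$, uses $\norm{F_{[k]}(x_{k+1}^\delta) - y_{[k]}^\delta} > \tau\delta_{[k]} \ge \tau\delta_{\min}$ together with $\tau(1-\eta) > 1+\eta$ from \eqref{eq:def-alp-tau}, and sums the telescoping inequality to force $\|x_K^\delta - x^*\|^2 \to -\infty$. Your per-step bound $2c\tau^2\delta_{\min}^2/\alpha$ with $c = (1-\eta) - (1+\eta)/\tau$ is algebraically identical to the paper's $2\tau\delta_{\min}^2[\tau(1-\eta)-(1+\eta)]/\alpha$.
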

\begin{proof}
Assume by contradiction that for every $l \in N$, there exists no
$i(l) \in \set{0,\dots, N-1}$ such that
$\norm{ F_{i(l)} (x_{lN + i(l)+1}^\delta) - y_{i(l)}^\delta } \le \tau\delta_{i(l)}$.
From Proposition~\ref{prop:monot} it follows that \eqref{eq:monot-aux}
can be applied recursively for $k=1, \dots, lN$, and we obtain
$$
- \norm{x_0 - x^*}^2
  \leq  \summ_{k=1}^{lN-1} \displaystyle \frac{2}{\alpha}
        \norm{ F_{[k]}(x_{k+1}^\delta) - y_{[k]}^\delta } \Big[
        (\eta-1) \norm{ F_{[k]}(x_{k+1}^\delta) - y_{[k]}^\delta }
        + (1+\eta) \delta_{[k]} \Big] \, , \quad l \in \N \, .
$$
Using the fact that $\norm{ F_{[k]}(x_{k+1}^\delta) - y_{[k]}^\delta } >
\tau \delta_{[k]}$, we obtain the estimate
\begin{eqnarray} \label{eq:st-finite-aux}
\norm{x_0 - x^*}^2
& \geq & \summ_{k=1}^{lN-1} \displaystyle \frac{2}{\alpha}
         \norm{ F_{[k]}(x_{k+1}^\delta) - y_{[k]}^\delta } \delta_{[k]}
         \Big[ \tau (1-\eta) - (1+\eta) \Big] \nonumber \\
& \geq & \Big[ \tau (1-\eta) - (1+\eta) \Big]
         \frac{2 \tau \delta_{\rm min}^2}{\alpha} \, (lN-1)
         \, , \quad l \in \N \, .
\end{eqnarray}
Due to \eqref{eq:def-alp-tau}, the right hand side of \eqref{eq:st-finite-aux}
tends to $+\infty$ as $l \to \infty$, which gives a contradiction.
Consequently, the minimum in (\ref{eq:def-discrep}) takes a finite value.
\end{proof}

For the rest of this section we assume, additionally to (A1) -- (A3), that
\medskip

\noindent (A4) \ 
The operators $F_i$ in \eqref{eq:inv-probl} and it's derivatives $F'_i$
are Lipschitz continuous, i.e., there exists a constant $L$ such that
$$
\norm{F_i(x) - F_i(\bar x)} + \norm{F'_i(x) - F'_i(\bar x)}
\ \le \ L \, \norm{x - \bar x} \, , \
{\rm for \ all} \ x , \bar x \in B_\rho(x_0) \, .
$$
Moreover, the constants $\alpha$ in \eqref{eq:def-alp-tau} and $M$ in
\eqref{eq:a-dfb} are such that $(\overline M + M) L < \alpha$, where
$\overline M = \overline M(\rho,x_0,y,\Delta) :=
\sup\{ \norm{F_i(x) - y_i^\delta} \, : \ i = 0, \dots, N-1 \, ,
\ x \in B_\rho(x_0) \, , \ \norm{y_i^\delta - y_i} \le \delta_i \, ,
\ |\delta| \le \Delta \}$.

The next result concerns the continuity of $x_{k}^\delta$ at $\delta = 0$
for fixed $k \in \N$.

\begin{lemma} \label{le:cont-nl}
Let $\delta_j = (\delta_{j,0}, \dots, \delta_{j,N-1}) \in (0,\infty)^N$
be given with $\lim_{j\to\infty} \delta_j = 0$. Moreover, let
$y^{\delta_j} = (y_0^{\delta_j}, \dots, y_{N-1}^{\delta_j}) \in Y^N$ be a
corresponding sequence of noisy data satisfying
$$
\norm{ y_i^{\delta_j} - y_i } \le \delta_{j,i} \, , \quad
i = 0, \dots, N-1 \, , \ j \in \N \, .
$$
Then, for each $k \in \N$ we have $\lim_{j\to\infty} x_{k+1}^{\delta_j}
=  x_{k+1}$.
\end{lemma}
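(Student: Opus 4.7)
The plan is to proceed by induction on $k$. The base case $k=0$ is immediate since, by the paper's convention, $x_0^{\delta_j}=x_0$ equals the exact-data initial guess for every $j$. For the inductive step, I assume $x_k^{\delta_j}\to x_k$ and prove the continuity statement $x_{k+1}^{\delta_j}\to x_{k+1}$.

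To obtain the key estimate, I would write the Euler equation \eqref{eq:itk} at noise level $\delta_j$ and at exact data and subtract, giving
\begin{equation*}
\alpha(x_{k+1}^{\delta_j}-x_{k+1}) = \alpha(x_k^{\delta_j}-x_k) + F_{[k]}'(x_{k+1})^*\bigl(F_{[k]}(x_{k+1})-y_{[k]}\bigr) - F_{[k]}'(x_{k+1}^{\delta_j})^*\bigl(F_{[k]}(x_{k+1}^{\delta_j})-y_{[k]}^{\delta_j}\bigr).
\end{equation*}
Inserting the zero terms $\pm F_{[k]}'(x_{k+1}^{\delta_j})^*F_{[k]}(x_{k+1})$ and $\pm F_{[k]}'(x_{k+1}^{\delta_j})^*y_{[k]}$, the difference of the last two summands splits into a term controlled by the Lipschitz constant of $F_{[k]}'$ from (A4) against the residual $\|F_{[k]}(x_{k+1})-y_{[k]}\|\le\overline M$, a term controlled by the Lipschitz constant of $F_{[k]}$ together with $\|F_{[k]}'\|\le M$ from (A1), and a term controlled by the data discrepancy $\|y_{[k]}^{\delta_j}-y_{[k]}\|\le\delta_{j,[k]}$. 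This yields
\begin{equation*}
\|x_{k+1}^{\delta_j}-x_{k+1}\| \;\le\; \|x_k^{\delta_j}-x_k\| + \frac{L(\overline M+M)}{\alpha}\,\|x_{k+1}^{\delta_j}-x_{k+1}\| + \frac{M}{\alpha}\,\delta_{j,[k]}.
\end{equation*}

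The decisive use of (A4) is the condition $L(\overline M+M)<\alpha$, which makes the coefficient of the middle term strictly less than one. Absorbing it on the left gives
\begin{equation*}
\|x_{k+1}^{\delta_j}-x_{k+1}\| \;\le\; \frac{\alpha}{\alpha-L(\overline M+M)}\Bigl(\|x_k^{\delta_j}-x_k\| + \tfrac{M}{\alpha}\,\delta_{j,[k]}\Bigr),
\end{equation*}
and letting $j\to\infty$ the induction hypothesis together with $\delta_{j,[k]}\to 0$ closes the argument.

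The main obstacle I anticipate is ensuring that both $x_{k+1}^{\delta_j}$ and $x_{k+1}$ lie in $B_\rho(x_0)$ so that the Lipschitz bounds of (A4) and the norm bound of (A1) apply with the stated constants; this follows from Proposition~\ref{prop:monot} and Lemma~\ref{lem:monot-aux2} as long as $k<k_*^{\delta_j}$, and from the exact-data analysis of Section~\ref{sec:conv-exact} for $x_{k+1}$. Since $k_*^{\delta_j}\to\infty$ as $\delta_j\to 0$ (which one reads off from \eqref{eq:st-finite-aux}), the estimate above is valid for all sufficiently large $j$, which is all the limit requires. A secondary, automatic point is that the same Lipschitz bookkeeping shows the map $x\mapsto x_k^{\delta_j}-\alpha^{-1}F_{[k]}'(x)^*(F_{[k]}(x)-y_{[k]}^{\delta_j})$ is a strict contraction on $B_\rho(x_0)$ under (A4), so each $J_k$ has a unique critical point in $B_\rho(x_0)$ and the iterates $x_{k+1}^{\delta_j}$ are unambiguously defined.
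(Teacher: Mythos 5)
Your argument is correct and is essentially the paper's own proof: the same induction on $k$, the same uniqueness-of-minimizers observation from (A4), the same insertion of cross terms into the difference of the two Euler equations \eqref{eq:itk}, and the same final inequality, which is exactly \eqref{eq:cont-nl-aux} --- the paper obtains it by pairing the subtracted equations with $x_{k+1}^{\delta_j}-x_{k+1}$ and applying Cauchy--Schwarz, you by taking norms directly, a purely cosmetic difference. The only point to correct is a side remark: your claim that $k_*^{\delta_j}\to\infty$ ``can be read off from \eqref{eq:st-finite-aux}'' is wrong, since that estimate bounds $k_*^{\delta}$ from \emph{above} (of order $\delta_{\mathrm{min}}^{-2}$), not from below, and indeed $k_*^{\delta_j}$ need not diverge (Theorem~\ref{th:noisy-nl} has to \emph{assume} unboundedness of $\{k_*^j\}$). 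You are right that one must ensure $x_{k+1}^{\delta_j}, x_{k+1}\in B_\rho(x_0)$ before invoking the Lipschitz constants of (A4) --- a point the paper's proof passes over in silence --- but the correct justification is simply Proposition~\ref{prop:monot} together with Lemma~\ref{lem:monot-aux2} for the indices $k+1\le k_*^{\delta_j}$ at which the lemma is actually applied, not a divergence claim for the stopping index.
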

\begin{proof}
Notice that the uniqueness of global minimizers of $J_k$ in
\eqref{eq:def-Jk} hold true. Indeed, let $\delta \in (0,\infty)^N$
and $y^\delta \in Y^N$ be given as in \eqref{eq:noisy-i}. If
$x_1$, $x_2 \in B_\rho(x_0)$ are minimizers of $J_k$, we have
\begin{align*}
\norm{ x_1 - x_2}^2
&  = \ \alpha^{-1} \langle F'_{[k]}(x_2)^* (F_{[k]}(x_2) - y_{[k]}^\delta) -
        F'_{[k]}(x_1)^* (F_{[k]}(x_1) - y_{[k]}^\delta) , \ x_1 - x_2  \rangle \\
&  = \ \alpha^{-1} \Big[ \langle F_{[k]}(x_2) - y_{[k]}^\delta , \
       (F'_{[k]}(x_2) - F'_{[k]}(x_1)) \, (x_1 - x_2)  \rangle \\
& \qquad + \langle (F_{[k]}(x_2) - F_{[k]}(x_1)) , \ F'_{[k]}(x_1) (x_1 - x_2)
         \rangle \Big] \\
& \leq \ (\overline M + M) L \alpha^{-1} \norm{ x_1 - x_2}^2 \, ,
\end{align*}
and from (A4) it follows that $x_1 = x_2$.
An immediate consequence of this uniqueness is the fact that the
iterative steps $x_{k+1}^\delta$ in \eqref{eq:itk} are uniquely
defined (see \eqref{eq:def-Jk}).
\medskip

The proof of Lemma~\ref{le:cont-nl} uses an inductive argument in $k$.
First we consider the case $k=0$. Notice that $x^{\delta_j}_0 = x_0$
for $j\in\N$ and we can estimate
\begin{align} \label{eq:cont-nl}
\norm{ x_1^{\delta_j} - x_1}^2
&  =  \ \alpha^{-1} \langle F'_{0}(x_1)^* (F_{0}(x_1) - y_{0}) -
          F'_{0}(x_1^{\delta_j})^* (F_{0}(x_1^{\delta_j}) - y_{0}^{\delta_j}) ,
          \ x_1^{\delta_j}  - x_1  \rangle \nonumber \\
&  =  \ \alpha^{-1} \Big[ \langle F_{0}(x_1) - y_{0} , \
        (F'_{0}(x_1) - F'_{0}(x_1^{\delta_j})) (x_1^{\delta_j}  - x_1) \rangle
        \nonumber \\
& \qquad + \langle F_{0}(x_1) - F_{0}(x_1^{\delta_j}) , \ F'_{0}(x_1^{\delta_j})
         (x_1^{\delta_j}  - x_1) \rangle + \langle y_{0}^{\delta_j} - y_{0} , \
         F'_{0}(x_1^{\delta_j}) (x_1^{\delta_j}  - x_1) \rangle \Big]
         \nonumber \\
& \le \ (\overline M + M) L \alpha^{-1} \norm{x_1^{\delta_j} - x_1}^2
        + M \alpha^{-1} \delta_{j,0} \norm{x_1^{\delta_j} - x_1} \, .
\end{align}
Therefore, it follows from (A4) that $\lim_{j\to\infty} x_{1}^{\delta_j} = x_{1}$.
Next, let $k > 0$ and assume that for all $k' < k$ we have $\lim_{j\to\infty}
x_{k'+1}^{\delta_j} = x_{k'+1}$. Arguing as in \eqref{eq:cont-nl} we obtain
the estimate
$$
\norm{ x_{k+1}^{\delta_j} - x_{k+1}}^2 \ \le \
(\overline M + M) L \alpha^{-1} \norm{x_{k+1}^{\delta_j} - x_{k+1}}^2
 + \Big( M \alpha^{-1} \delta_{j,0} + \norm{x_k^{\delta_j} - x_k} \Big)
 \norm{x_{k+1}^{\delta_j} - x_{k+1}} \, .
$$
From (A4) it follows that
\begin{equation} \label{eq:cont-nl-aux}
[\alpha - (\overline M + M) L] \alpha^{-1} \norm{x_{k+1}^{\delta_j} - x_{k+1}}
\ \le \ M \alpha^{-1} \delta_{j,0} + \norm{x_k^{\delta_j} - x_k}
\end{equation}
and from the induction hypothesis we conclude that $\lim_{j\to\infty}
x_{k+1}^{\delta_j} = x_{k+1}$.
\end{proof}

\begin{theorem}[Convergence for noisy data] \label{th:noisy-nl}
Let $\delta_j = (\delta_{j,0}, \dots$, $\delta_{j,N-1})$ be a given
sequence in $(0,\infty)^N$ with $\lim_{j\to\infty} \delta_j = 0$, and
let $y^{\delta_j} = (y^{\delta_j}_{0}, \dots, y^{\delta_j}_{N-1}) \in Y^N$ be a
corresponding sequence of noisy data satisfying
$\norm{ y_i^{\delta_j} - y_i } \le \delta_{j,i}$, $i = 0, \dots, N-1$, $j \in \N$.
Denote by $k^j_* := k_*(\delta_j, y^{\delta_j})$ the corresponding stopping
index defined in \eqref{eq:def-discrep} and assume that the sequence
$\{ k^j_* \}_{j\in\N}$ is unbounded. Then $x_{k^j_*}^{\delta_j}$ converges
to a solution of \eqref{eq:inv-probl}, as $j \to \infty$.
Moreover, if \eqref{eq:kern-cond} holds, then $x_{k^j_*}^{\delta^j} \to x^\dag$.
\end{theorem}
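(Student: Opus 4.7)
Since $\{k^j_*\}_{j\in\N}$ is assumed unbounded, the first step is to pass to a subsequence (still indexed by $j$) along which $k^j_* \to \infty$ monotonically. Let $\tilde x$ denote the exact-data limit guaranteed by Theorem~\ref{th:exact}, so that $x_k \to \tilde x$, a solution of \eqref{eq:inv-probl}, with $\tilde x = x^\dag$ whenever \eqref{eq:kern-cond} holds. The goal is then to show that $x_{k^j_*}^{\delta_j} \to \tilde x$ strongly in $X$.

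Two tools will do the bulk of the work. The first is the pointwise continuity of Lemma~\ref{le:cont-nl}, which gives $x_k^{\delta_j} \to x_k$ for each fixed $k$ as $j\to\infty$. The second is the noisy monotonicity of Proposition~\ref{prop:monot} applied with the reference solution $x^\dag \in B_{\rho/4}(x_0)$ provided by Lemma~\ref{lem:min-norm}:
\[
\norm{x_{k^j_*}^{\delta_j} - x^\dag} \ \le\ \norm{x_K^{\delta_j} - x^\dag}, \qquad K < k^j_*.
\]
Fixing $K$ and passing $j\to\infty$ via Lemma~\ref{le:cont-nl}, then sending $K\to\infty$ via Theorem~\ref{th:exact}, yields $\limsup_{j\to\infty}\norm{x_{k^j_*}^{\delta_j} - x^\dag} \le \norm{\tilde x - x^\dag}$; in particular, $\{x_{k^j_*}^{\delta_j}\}$ is bounded.

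Next I would extract a weak subsequential limit $x_{k^j_*}^{\delta_j} \rightharpoonup \bar x$ and verify that $\bar x$ solves \eqref{eq:inv-probl}. By the stopping rule \eqref{eq:def-discrep}, at the terminating cycle at least one residual satisfies $\norm{F_i(x_{k^j_*+i+1}^{\delta_j}) - y_i^{\delta_j}} \le \tau \delta_{j,i} \to 0$; a pigeonhole argument on $\{0,\dots,N-1\}$ fixes some index $i_0$ along a further subsequence, and the weak continuity of $F_{i_0}$ from (A1) gives $F_{i_0}(\bar x) = y_{i_0}$. The remaining $N-1$ equations are handled by combining Lemma~\ref{lem:resid-estim} with the in-cycle displacement bound $\norm{x_{k+1}^{\delta_j} - x_k^{\delta_j}} \le \alpha^{-1} M \norm{F_{[k]}(x_{k+1}^{\delta_j}) - y_{[k]}^{\delta_j}}$ read off from \eqref{eq:itk} and \eqref{eq:a-dfb}, which forces all residuals inside the stopping cycle to vanish in the limit. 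Under \eqref{eq:kern-cond}, $\tilde x = x^\dag$, so the $\limsup$ bound above forces $\bar x = x^\dag$; weak convergence together with convergence of norms then upgrades to strong convergence of the full sequence to $x^\dag$.

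The main obstacle I anticipate is the general case (without \eqref{eq:kern-cond}), where one must show that every weak subsequential limit coincides with $\tilde x$. My plan is to adapt the Cauchy-sequence argument from the proof of Theorem~\ref{th:exact}. The stopping criterion $\norm{F_{[k]}(x_{k+1}^{\delta_j}) - y_{[k]}^{\delta_j}} > \tau\delta_{[k]}$, valid for $k < k^j_*$, together with Lemma~\ref{lem:monot-aux} gives a uniform bound on $\sum_{k=0}^{k^j_*-1}\norm{F_{[k]}(x_{k+1}^{\delta_j}) - y_{[k]}^{\delta_j}}^2$, which allows the inner-product estimate on $|\langle e_n - e_k, e_n\rangle|$ used in Theorem~\ref{th:exact} to be transferred to the noisy iterates. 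Combining this with Lemma~\ref{le:cont-nl} and passage to $j\to\infty$ should establish the Cauchy property of $\{x_{k^j_*}^{\delta_j}\}_j$, paralleling the analogous arguments in \cite[Thm.~3.3]{CHLS08} and \cite[Thm.~4.4]{HLS07} that the authors already invoke.
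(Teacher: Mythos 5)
Your toolkit (monotonicity, the fixed-$k$ continuity of Lemma~\ref{le:cont-nl}, convergence for exact data) is the right one, but the way you combine it leaves a real gap in the general case. The argument the paper imports from \cite[Theorem~3.6]{CHLS08} is a direct sandwich: let $\tilde x = \lim_k x_k$ be the exact-data limit from Theorem~\ref{th:exact}. Since $\tilde x$ solves \eqref{eq:inv-probl} and lies in $B_\rho(x_0)$, the estimate \eqref{eq:monot-aux} of Lemma~\ref{lem:monot-aux} holds with $\tilde x$ in place of $x^*$ (its proof uses only that the reference point is a solution in $B_\rho(x_0)$, and $x_{k+1}^{\delta_j} \in B_\rho(x_0)$ is already secured by Proposition~\ref{prop:monot}); for $k < k_*^{\delta_j}$ the right-hand side is negative, so $\norm{x_{k^j_*}^{\delta_j} - \tilde x} \le \norm{x_K^{\delta_j} - \tilde x}$ for every $K \le k^j_*$. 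Choosing $K$ with $\norm{x_K - \tilde x}$ small and then $j$ large (so that $k^j_* > K$ and $\norm{x_K^{\delta_j} - x_K}$ is small by Lemma~\ref{le:cont-nl}) gives $x_{k^j_*}^{\delta_j} \to \tilde x$ outright. You instead run the monotonicity with $x^\dag$ as reference, which yields only $\limsup_j \norm{x_{k^j_*}^{\delta_j} - x^\dag} \le \norm{\tilde x - x^\dag}$ --- sufficient when \eqref{eq:kern-cond} forces $\tilde x = x^\dag$, but in the general case it identifies neither the limit nor even that one exists, which is what pushes you into the weak-compactness detour.

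That detour contains the concrete error. The stopping rule \eqref{eq:def-discrep} guarantees that \emph{one} residual in the terminating cycle drops below $\tau\delta_{j,i}$; the paper warns immediately after \eqref{eq:def-discrep} that the other $N-1$ residuals need not be small at $k_*^\delta$ (this is precisely a motivation for the loping variant). Lemma~\ref{lem:resid-estim} compares the residual of the \emph{same} operator $F_{[k]}$ at $x_{k+1}^\delta$ and $x_k^\delta$; it never transfers smallness from equation $i_0$ to equation $i_0+1$, and the displacement bound $\norm{x_{k+1}^\delta - x_k^\delta} \le \alpha^{-1} M \norm{F_{[k]}(x_{k+1}^\delta) - y_{[k]}^\delta}$ only controls the step attached to the one equation whose residual you already know is small. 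So the claim that ``all residuals inside the stopping cycle vanish in the limit'' does not follow, and you obtain $F_i(\bar x) = y_i$ for a single index only; the closing plan to adapt the Cauchy argument of Theorem~\ref{th:exact} runs into the same obstruction when identifying the limit. All of this is avoided by taking $\tilde x$ as the reference point in the monotonicity inequality. (A minor further point: if ``unbounded'' is read literally rather than as $k^j_* \to \infty$, extracting a subsequence with $k^j_* \to \infty$ proves convergence only along that subsequence; the remark following the theorem indicates the intended hypothesis is $k^j_* \to \infty$.)
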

\begin{proof}
The proof is analogous to the proof of \cite[Theor.~3.6]{CHLS08} and
will be omitted. In the proof, \cite[Theor.~3.5]{CHLS08} has to be replaced by
Lemma~\ref{le:cont-nl} above.
\end{proof}

\begin{remark}
The assumption on the boundedness of the sequence $\{ k^j_* \}_{j\in\N}$
in Theorem~\ref{th:noisy-nl} is crucial for the proof. This assumption is
natural when dealing with ill-posed problems and noisy data, since in
practical applications one generally has $k^\delta_* \to \infty$ as
$\delta \to 0$. A similar assumption is also needed in \cite{KowSch02}
to prove convergence of the Landweber-Kaczmarz iteration for noisy data.

In Section~\ref{sec:litk} we investigate the coupling of the \textsc{iTK}
iteration with a loping strategy, which allow us to drop the above
assumption on the boundedness of $\{ k^j_* \}_{j\in\N}$ and still prove
a semiconvergence result analog to Theorem~\ref{th:noisy-nl}.
\end{remark}

\section{The loping iterated Tikhonov-Kaczmarz method} \label{sec:litk}

Motivated by the ideas in \cite{HLS07,CHLS08,HLR09,BKL09}, we investigate
in this section a \textit{loping iterated Tikhonov-Kaczmarz method}
(\textsc{l-iTK} method) for solving (\ref{eq:inv-probl}). This iterative
method is defined by
\begin{equation} \label{eq:litk}
x_{k+1}^\delta \ = \ x_{k}^\delta - \alpha^{-1} \omega_k F_{[k]}'(x_{k+1}^\delta)^*
                  ( F_{[k]}(x_{k+1}^\delta) - y_{[k]}^\delta ) \, .
\end{equation}
where
\begin{equation} \label{eq:def-omk}
    \omega_k \ := \
    \begin{cases}
           1  & \norm{F_{[k]}(x_{k+1}^\delta) - y_{[k]}^\delta}
                \geq \tau \delta_{[k]} \\
           0  & \text{otherwise}
         \end{cases} \, .
\end{equation}
The positive constants $\alpha$ and $\tau$ are defined as in
\eqref{eq:def-alp-tau}. The meaning of \eqref{eq:litk}, \eqref{eq:def-omk}
is the following: at each iterative step an element $x_{k+1/2} \in D_{[k]}$
satisfying
$$
x_{k+1/2} \ = \ x_{k}^\delta - \alpha^{-1} F_{[k]}'(x_{k+1/2})^*
( F_{[k]}(x_{k+1/2}) - y_{[k]}^\delta )
$$
is computed. If $\| F_{[k]}(x_{k+1/2}) - y_{[k]}^\delta \| \ge \tau \delta_{[k]}$
we set $x_{k+1}^\delta = x_{k+1/2}$, otherwise $x_{k+1}^\delta = x_{k}^\delta$.

For exact data ($\delta = 0$) the \textsc{l-iTK} reduces to the \textsc{iTK}
iteration investigated in the previous sections. For noisy data however,
the \textsc{l-iTK} method is fundamentally different from the \textsc{iTK}
method: The bang-bang relaxation parameter $\omega_k$ effects that the
iterates defined in (\ref{eq:itk}) become stationary if all components of
the residual vector $\norm{F_i(x_k^\delta) - y_i^{\delta}}$ fall below a
pre-specified threshold. This characteristic renders \eqref{eq:itk} a
regularization method, as we shall see in Subsection~\ref{ssec:litk-conv-an}.

\begin{remark}
As observed in Remark~\ref{rem:non-uniq}, the iteration in \eqref{eq:litk}
corresponds to
$x_{k+1}^\delta \ \in \ \argmin \big\{ \omega_k \|F_{[k]}(x) - y_{[k]}^\delta\|^2
+ \alpha \| x - x_k^\delta \| \big\}$ and is not uniquely
defined. For noisy data, a semi-convergence result is obtained under the
smooth assumption (A4) on the functionals $F_i$, which guarantees that the
\textsc{l-iTK} iteration is uniquely defined.
\end{remark}

The \textsc{l-iTK} iteration should be terminated when, for the first time,
all $x_k^\delta$ are equal within a cycle. That is, we stop the iteration at
\begin{equation} \label{eq:def-discr-litk}
  k_*^\delta \ := \ \min \{ lN \in \N: \, x_{lN}^\delta =
                 x_{lN+1}^\delta = \cdots = x_{lN+N-1}^\delta \} \, ,
\end{equation}
Notice that $k_*^\delta$ is the smallest multiple of $N$ such that
\begin{equation} \label{eq:def-discr-litk2}
x_{k_*^\delta}^\delta = x_{k_*^\delta+1}^\delta = \dots = x_{k_*^\delta+N-1}^\delta \, .
\end{equation}

\subsection{Convergence analysis} \label{ssec:litk-conv-an}

In what follows we assume that (A1) -- (A3) and (A4) hold true and
that $x_k^\delta$, $\omega_k$, $\alpha$ and $\tau$ are defined by
\eqref{eq:litk}, \eqref{eq:def-omk} and \eqref{eq:def-alp-tau}.
We start by listing some straightforward facts about the \textsc{l-iTK}
iteration:
\begin{itemize}
\item Lemma~\ref{lem:resid-estim} holds true.
Lemma~\ref{lem:monot-aux} still holds true, but \eqref{eq:monot-aux}
has to be replaced by \vspace{-0.6cm}
\end{itemize}
\begin{equation} \label{eq:monot-aux-litk}
\| x_{k+1}^\delta  - x^* \|^2 - \| x_k^\delta - x^* \|^2  \le
   \frac{2\omega_k}{\alpha} \norm{ F_{[k]}(x_{k+1}^\delta) - y_{[k]}^\delta }
   \Big[ (\eta-1) \norm{ F_{[k]}(x_{k+1}^\delta) - y_{[k]}^\delta }
         + (1+\eta)\delta_{[k]} \Big] \, .
\end{equation}
\begin{itemize}
\item Lemma~\ref{lem:monot-aux2} and Proposition~\ref{prop:monot} hold true.
\item Theorem~\ref{th:exact} holds true (for exact data, the \textsc{l-iTK}
iteration reduces to \textsc{iTK}).
\end{itemize}
Before proving the main semiconvergence theorem we need two auxiliary
results: the first result guarantees that, for noisy data, the stopping
index $k_*^\delta$ in \eqref{eq:def-discr-litk} is finite (compare with
Proposition~\ref{prop:st-f}); the second result is the analogous of
Lemma~\ref{le:cont-nl} for the \textsc{l-iTK} iteration.

\begin{propo} \label{prop:st-f-litk}
Assume $\delta_{\rm min} := \min \{ \delta_0, \dots  \delta_{N-1} \} > 0$.
Then $k_*^\delta$ in \eqref{eq:def-discr-litk} is finite, and
\begin{equation} \label{eq:litk-monot-res}
\norm{ F_{i}(x_{k_*^\delta}^\delta) - y_i^\delta } \ < \
\kappa \tau \delta_i \, , \qquad i = 0, \dots, N-1 \, .
\end{equation}
where $\kappa := [ (1+\eta) + M^2/\alpha ] / (1-\eta)$.
\end{propo}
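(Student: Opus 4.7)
My plan is to treat finiteness and the residual bound separately, mimicking Proposition~\ref{prop:st-f} for the former and exploiting the implicit character of \eqref{eq:litk} for the latter.

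For finiteness, I will sum the loping monotonicity \eqref{eq:monot-aux-litk} from $k=0$ to $k=lN-1$ to obtain
\begin{equation*}
\norm{x_0-x^*}^2 \;\geq\; \sum_{k=0}^{lN-1}\frac{2\omega_k}{\alpha}\,\norm{F_{[k]}(x_{k+1}^\delta)-y_{[k]}^\delta}\Big[(1-\eta)\norm{F_{[k]}(x_{k+1}^\delta)-y_{[k]}^\delta}-(1+\eta)\delta_{[k]}\Big].
\end{equation*}
Whenever $\omega_k=1$, \eqref{eq:def-omk} forces $\norm{F_{[k]}(x_{k+1}^\delta)-y_{[k]}^\delta}\geq\tau\delta_{[k]}\geq\tau\delta_{\rm min}$, and \eqref{eq:def-alp-tau} gives $\tau(1-\eta)-(1+\eta)>0$. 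Hence each active index contributes at least $2\tau\delta_{\rm min}^2\alpha^{-1}[\tau(1-\eta)-(1+\eta)]>0$, forcing $\{k:\omega_k=1\}$ to be finite. Past the last such index, all iterates become stationary and the stopping rule \eqref{eq:def-discr-litk} triggers at the next multiple of $N$.

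For the residual estimate \eqref{eq:litk-monot-res}, fix $i\in\{0,\dots,N-1\}$ and set $j:=k_*^\delta+i$, so that $[j]=i$ and $x_j^\delta=x_{k_*^\delta}^\delta$ by \eqref{eq:def-discr-litk2}. Since the stopping cycle is made of non-updates, the implicit half-step $x_{j+1/2}$ from \eqref{eq:litk} obeys the fixed-point identity $x_{j+1/2}-x_j^\delta=-\alpha^{-1}F'_i(x_{j+1/2})^*(F_i(x_{j+1/2})-y_i^\delta)$ together with $\norm{F_i(x_{j+1/2})-y_i^\delta}<\tau\delta_i$. Using (A1), the identity gives $\norm{x_{j+1/2}-x_j^\delta}\leq\alpha^{-1}M\norm{F_i(x_{j+1/2})-y_i^\delta}$. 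The plan is then to split
\begin{equation*}
F_i(x_j^\delta)-y_i^\delta = \big[F_i(x_j^\delta)-F_i(x_{j+1/2})-F_i'(x_{j+1/2})(x_j^\delta-x_{j+1/2})\big]+F_i'(x_{j+1/2})(x_j^\delta-x_{j+1/2})+\big[F_i(x_{j+1/2})-y_i^\delta\big],
\end{equation*}
bound the first bracket by $\eta\norm{F_i(x_j^\delta)-F_i(x_{j+1/2})}$ via (A2), the middle term by $M\norm{x_{j+1/2}-x_j^\delta}$ via (A1), and then use the triangle inequality $\norm{F_i(x_j^\delta)-F_i(x_{j+1/2})}\leq\norm{F_i(x_j^\delta)-y_i^\delta}+\norm{F_i(x_{j+1/2})-y_i^\delta}$. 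After rearrangement this yields $(1-\eta)\norm{F_i(x_j^\delta)-y_i^\delta}\leq[(1+\eta)+M^2/\alpha]\norm{F_i(x_{j+1/2})-y_i^\delta}$, which divided by $1-\eta$ is exactly \eqref{eq:litk-monot-res}.

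The main technical point I anticipate is not the transfer estimate itself but matching the lop-conditions $\omega_{k_*^\delta+i}=0$ to every $i\in\{0,\dots,N-1\}$: the stopping relation \eqref{eq:def-discr-litk2} formally provides only $N-1$ equalities between consecutive iterates, so I expect to interpret the stopping test in its natural operational sense (a full cycle of residual checks must have produced no update before halting) so that the transfer argument above applies uniformly in $i$ and the bound \eqref{eq:litk-monot-res} holds on the nose.
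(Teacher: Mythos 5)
Your proof is correct and follows essentially the same route as the paper: finiteness comes from summing the loping monotonicity estimate \eqref{eq:monot-aux-litk} and noting that each active step contributes a fixed positive amount bounded by $\norm{x_0-x^*}^2$, and the residual bound \eqref{eq:litk-monot-res} is obtained by exactly the paper's three-term decomposition of $F_i(x_{k_*^\delta}^\delta)-y_i^\delta$ through the half-step $x_{k_*^\delta+1/2}$, using (A2), (A1) and the implicit fixed-point identity to get $(1-\eta)\norm{F_i(x_{k_*^\delta}^\delta)-y_i^\delta}\le[(1+\eta)+M^2/\alpha]\,\norm{F_i(x_{k_*^\delta+1/2})-y_i^\delta}$. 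Your closing remark about reading \eqref{eq:def-discr-litk2} as ``a full cycle with $\omega_{k_*^\delta+i}=0$ for all $i$'' matches how the paper itself uses the stopping rule in \eqref{eq:def-kappa}.
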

\begin{proof}
Assume by contradiction that for every $l \in N$, there exists
$i(l) \in \set{0,\dots, N-1}$ such that $x_{lN + i(l)} \not= x_{lN}$.
From Proposition~\ref{prop:monot} it follows that \eqref{eq:monot-aux-litk}
can be applied recursively for $k=1, \dots, lN$, and we obtain
$$
- \norm{x_0 - x^*}^2
  \leq  \summ_{k=1}^{lN-1} 2 \frac{\omega_k}{\alpha}
        \norm{ F_{[k]}(x_{k+1}^\delta) - y_{[k]}^\delta } \Big[
        (\eta-1) \norm{ F_{[k]}(x_{k+1}^\delta) - y_{[k]}^\delta }
        + (1+\eta) \delta_{[k]} \Big] \, , \quad l \in \N \, ,
$$
Using the fact that either $\omega_k=0$ or
$\norm{ F_{[k]}(x_{k+1}^\delta) - y_{[k]}^\delta } > \tau \delta_{[k]}$,
we obtain the estimate
\begin{equation} \label{eq:st-finite-aux1}
\norm{x_0 - x^*}^2 \ \geq \
  \summ_{k=1}^{lN-1} 2 \frac{\omega_{k}}{\alpha}
  \norm{ F_{[k]}(x_{k+1}^\delta) - y_{[k]}^\delta } \delta_{[k]}
  \Big[ \tau (1-\eta) - (1+\eta) \Big] \, .
\end{equation}
Equation (\ref{eq:st-finite-aux1}) and the fact that
$x_{l'N + i(l')} \not= x_{l'N}$ for all $l' \in \N$, imply
\begin{equation} \label{eq:st-finite-aux2}
\norm{x_0 - x^*}^2 \ \geq \
  \Big[ \tau (1-\eta) - (1+\eta) \Big]
  \, 2 l \, \frac{\delta_{\rm min}}{\alpha} \, (\tau \delta_{\rm min})
  \, , \quad l \in \N \, .
\end{equation}
Due to \eqref{eq:def-alp-tau}, the right hand side of \eqref{eq:st-finite-aux2}
tends to $+\infty$ as $l \to \infty$, which gives a contradiction.
Consequently, the set
$\{ l \in \N: x_{lN + i} = x_{lN}$, $0 \leq i \leq N-1 \}$ is not empty
and the minimum in (\ref{eq:def-discrep}) takes a finite value.
\medskip

It remains to prove \eqref{eq:litk-monot-res}.
For each fixed $i \in \set{0,\dots, N-1}$ we have
\begin{align*}
\norm{ F_{i}(x_{k_*^\delta}^\delta) - y_i^\delta }
& \leq  \norm{ F_{i}(x_{k_*^\delta}^\delta) - F_{i}(x_{k_*^\delta+1/2}^\delta)
        + F'_{i}(x_{k_*^\delta+1/2}^\delta)
        ( x_{k_*^\delta+1/2}^\delta - x_{k_*^\delta}^\delta ) } \\
& \quad + \norm{ F_{i}(x_{k_*^\delta+1/2}^\delta) - y_i^\delta }
        + \norm{ -F'_{i}(x_{k_*^\delta+1/2}^\delta)
        ( x_{k_*^\delta+1/2}^\delta - x_{k_*^\delta}^\delta ) } \\
& \leq  \eta \norm{ F_{i}(x_{k_*^\delta}^\delta) - F_{i}(x_{k_*^\delta+1/2}^\delta)
        \pm y_i^\delta } + \tau \delta_i
        + M \norm{ x_{k_*^\delta+1/2}^\delta - x_{k_*^\delta}^\delta } \\
& \leq  \eta \norm{ F_{i}(x_{k_*^\delta}^\delta) - y_i^\delta }
        + (1+\eta) \tau \delta_i + M \alpha^{-1}
        \norm{ F'_{i}(x_{k_*^\delta+1/2}^\delta)
        ( F_{i}(x_{k_*^\delta+1/2}^\delta) - y_i^\delta ) }
\end{align*}
(in the last inequality we used the fact that $\omega_{k_*^\delta+i} = 0$
and $\norm{ F_{i}(x_{k_*^\delta+1/2}^\delta) - y_i^\delta } \le \tau \delta_i$).%
\footnote{Notice that for distinct $i \in \set{0,\dots, N-1}$ the points 
$x_{k_*^\delta+1/2}^\delta$ may be different, since they are minimizers of
the Tikhonov functionals $J_{k_*^\delta+i}(x) :=
\norm{ F_{i}(x) - y_{i}^\delta }^2 + \alpha \norm{x - x_{k_*^\delta}^\delta}^2$.}
Therefore, we obtain the estimate
\begin{equation} \label{eq:def-kappa}
(1-\eta) \norm{ F_{i}(x_{k_*^\delta}^\delta) - y_i^\delta }
\ \leq \ (1+\eta) \tau \delta_i + M^2 \alpha^{-1}
         \norm{ F_{i}(x_{k_*^\delta+1/2}^\delta) - y_i^\delta }
\end{equation}
and \eqref{eq:litk-monot-res} follows.
\end{proof}

\begin{lemma} \label{le:cont-nl-litk}
Let $\delta_j = (\delta_{j,0}, \dots, \delta_{j,N-1}) \in (0,\infty)^N$
be given with $\lim_{j\to\infty} \delta_j = 0$. Moreover, let
$y^{\delta_j} = (y_0^{\delta_j}, \dots, y_{N-1}^{\delta_j}) \in Y^N$ be a
corresponding sequence of noisy data satisfying
$$
\norm{ y_i^{\delta_j} - y_i } \le \delta_{j,i} \, , \quad
i = 0, \dots, N-1 \, , \ j \in \N \, .
$$
Then, for each fixed $k \in \N$ we have $\lim_{j\to\infty} x_{k+1}^{\delta_j}
= x_{k+1}$.
\end{lemma}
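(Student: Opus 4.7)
The plan is to mimic the induction on $k$ used in the proof of Lemma~\ref{le:cont-nl}, with one essential new ingredient: a case analysis on the bang-bang parameter $\omega_k^{\delta_j}$. First I introduce the ``half-step'' iterate $x_{k+1/2}^{\delta_j}$ defined by
$$x_{k+1/2}^{\delta_j} \ = \ x_k^{\delta_j} \ - \ \alpha^{-1} F'_{[k]}(x_{k+1/2}^{\delta_j})^\ast \bigl( F_{[k]}(x_{k+1/2}^{\delta_j}) - y_{[k]}^{\delta_j} \bigr).$$
By the uniqueness argument already used in Lemma~\ref{le:cont-nl} (which relies only on (A4) and the condition $(\overline M + M)L < \alpha$) this $x_{k+1/2}^{\delta_j}$ is uniquely determined in $B_\rho(x_0)$, and reproducing verbatim the contractivity estimate from that proof I obtain
$$[\alpha - (\overline M + M) L]\,\alpha^{-1} \norm{x_{k+1/2}^{\delta_j} - x_{k+1}} \ \le \ M \alpha^{-1} \delta_{j,[k]} + \norm{x_k^{\delta_j} - x_k},$$
which, combined with the induction hypothesis $x_k^{\delta_j} \to x_k$ and $\delta_{j,[k]}\to 0$, yields $x_{k+1/2}^{\delta_j} \to x_{k+1}$. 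The base case $k=0$ is immediate since $x_0^{\delta_j} = x_0$.

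The second step promotes the half-step to the actual \textsc{l-iTK} iterate, which equals either $x_{k+1/2}^{\delta_j}$ (if $\omega_k^{\delta_j}=1$) or $x_k^{\delta_j}$ (if $\omega_k^{\delta_j}=0$). I split according to the size of the noise-free residual $r_k := \norm{F_{[k]}(x_{k+1}) - y_{[k]}}$. If $r_k>0$, then by the first step together with the Lipschitz continuity provided by (A4), $\norm{F_{[k]}(x_{k+1/2}^{\delta_j}) - y_{[k]}^{\delta_j}} \to r_k > 0$ while $\tau\delta_{j,[k]} \to 0$; hence $\omega_k^{\delta_j}=1$ for all sufficiently large $j$, so $x_{k+1}^{\delta_j} = x_{k+1/2}^{\delta_j} \to x_{k+1}$. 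If instead $r_k=0$, the exact Tikhonov equation \eqref{eq:itk} forces $x_{k+1}=x_k$; consequently both potential outcomes $x_{k+1/2}^{\delta_j}$ and $x_k^{\delta_j}$ converge to the common limit $x_k = x_{k+1}$ (by the first step and by the induction hypothesis, respectively), so the value of $\omega_k^{\delta_j}$ becomes immaterial.

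The only delicate point, and the issue that distinguishes this lemma from its \textsc{iTK} counterpart, is precisely the degenerate case $r_k=0$: the factor $\omega_k^{\delta_j}$ is a discontinuous function of $\delta_j$ and may in principle oscillate arbitrarily along the sequence, so one cannot hope to pin it down. The argument above circumvents the difficulty by observing that at any exact step whose residual vanishes the noise-free iteration is itself stationary, and therefore both potential outcomes of the bang-bang switch share the same limit. This is what allows the induction to close without any further hypothesis on how or when the switch activates.
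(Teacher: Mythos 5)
Your proof is correct, but it takes a genuinely different route from the paper's. The paper also inducts on $k$, but its case split is on the \emph{noisy} switch $\omega_k^{\delta_j}\in\{0,1\}$: in the branch $\omega_k^{\delta_j}=1$ it reuses the contraction estimate \eqref{eq:cont-nl}--\eqref{eq:cont-nl-aux} verbatim, and in the branch $\omega_k^{\delta_j}=0$ it exploits the fact that the half-step residual is below $\tau\delta_{j,[k]}$ to deduce, via the same computation as in \eqref{eq:def-kappa}, the bound $\norm{F_{[k]}(x_k^{\delta_j})-y_{[k]}^{\delta_j}}\le\kappa\tau\delta_{j,[k]}$ and hence an explicit estimate $\norm{x_{k+1}^{\delta_j}-x_{k+1}}\le \alpha[\alpha-(\overline M+M)L]^{-1}(\kappa\tau+1)\delta_{j,[k]}$ (plus the induction term); taking the larger of the two constants disposes of any oscillation of the switch along $j$. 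You instead prove once and for all that the half-step $x_{k+1/2}^{\delta_j}$ converges to the exact iterate, and then split on the \emph{exact} residual $r_k$: when $r_k>0$ the switch is eventually forced to equal $1$, and when $r_k=0$ the exact step is stationary ($x_{k+1}=x_k$ by \eqref{eq:itk}), so both possible outcomes of the bang-bang choice share the same limit. Your argument is cleaner and entirely bypasses the $\kappa$-estimate of Proposition~\ref{prop:st-f-litk}, at the price of being purely qualitative, whereas the paper's case analysis yields explicit $O(\delta_{j,[k]})$ one-step stability bounds; for the statement of the lemma the qualitative version suffices. Both proofs rely on the same prerequisites (uniqueness of the Tikhonov minimizer under (A4) and the iterates remaining in $B_\rho(x_0)$), so neither has an advantage there.
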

\begin{proof}
Arguing as in the first part of the proof of Lemma~\ref{le:cont-nl},
we conclude that the iterative steps $x_{k+1}^\delta$ in \eqref{eq:litk}
-- \eqref{eq:def-omk} are uniquely defined.
\medskip

The proof of Lemma~\ref{le:cont-nl-litk} uses an inductive argument
in $k$. First we take $k=0$ (notice that $x^{\delta_j}_0 = x_0$
for $j\in\N$). We have to consider two cases: If $\omega_0 = 1$, we argue
as in \eqref{eq:cont-nl} and obtain the estimate
\begin{align} \label{eq:case-w1-litk}
\norm{ x_1^{\delta_j} - x_1}
\ \le \ M \, [\alpha - (\overline M + M) L]^{-1} \, \delta_{j,0} \, .
\end{align}
Otherwise, if $\omega_0 = 0$, we have $x_{1}^{\delta_j} = x_0$ and
$\norm{F_{0}(x_{0+1/2}^{\delta_j}) - y_{0}^{\delta_j}} \le
\tau \delta_{j,0}$. Therefore,
\begin{align*}
\norm{ x_1^{\delta_j} - x_1}^2
&  =  \ \alpha^{-1} \langle F'_{0}(x_1)^* (F_{0}(x_1) - y_{0}
        \pm F_0(x_{0}) \pm y_0^{\delta_j}) , \
        x_1^{\delta_j}  - x_1  \rangle \nonumber \\
& \le \ M \alpha^{-1} \norm{x_1^{\delta_j} - x_1} \, \Big\{
        \norm{F_0(x_1) - F_0(x_0)} + \norm{F_0(x_0) - y_0^{\delta_j}}
        + \norm{y_0^{\delta_j} - y_0} \Big\} \nonumber \\
& \le \ (\overline M +M) \alpha^{-1} \norm{x_1^{\delta_j} - x_1} \, \Big\{
        L \norm{x_1 - x_1^\delta} + \norm{F_0(x_0) - y_0^{\delta_j}}
        + \delta_{j,0} \Big\} \, . \nonumber
\end{align*}
Arguing as in \eqref{eq:def-kappa} we estimate $\norm{F_0(x_0) - y_0^{\delta_j}}
\le \kappa\tau\delta_{j,0}$. Therefore, it follows that
\begin{align} \label{eq:case-w0-litk}
\norm{ x_1^{\delta_j} - x_1} \ \le \
\alpha [ \alpha - (\overline M + M) L]^{-1} \, (\kappa \tau + 1) \delta_{j,0} \, .
\end{align}
Thus, it follows from \eqref{eq:case-w1-litk}, 
\eqref{eq:case-w0-litk} and (A4) that $\lim_{j\to\infty}
x_{1}^{\delta_j} =  x_{1}$.

Now, take $k > 0$ and assume that for all $k' < k$ we have $\lim_{j\to\infty}
x_{k'+1}^{\delta_j} = x_{k'+1}$. Once again two cases must be considered:
$\omega_0 = 1$ and $\omega_0 = 0$. Arguing as in the case $k=0$, we obtain
estimates similar to \eqref{eq:case-w1-litk} and \eqref{eq:case-w0-litk}.
Thus, $\lim_{j\to\infty} x_{k+1}^{\delta_j} = x_{k+1}$ follows using the
induction hypothesis (compare with \eqref{eq:cont-nl-aux} and the corresponding
step in the proof of Lemma~\ref{le:cont-nl}).
\end{proof}

We are now ready to state and prove a semiconvergence result for the
\textsc{l-iTK} iteration.

\begin{theorem} \label{th:conv-an-litk}
Let $\delta_j = (\delta_{j,0}, \dots$, $\delta_{j,N-1})$ be a given
sequence in $(0,\infty)^N$ with $\lim_{j\to\infty} \delta_j = 0$, and
let $y^{\delta_j} = (y^{\delta_j}_{0}, \dots, y^{\delta_j}_{N-1}) \in Y^N$ be a
corresponding sequence of noisy data satisfying
$\norm{ y_i^{\delta_j} - y_i } \le \delta_{j,i}$, $i = 0, \dots, N-1$, $j \in \N$.
Denote by $k^j_* := k_*(\delta_j, y^{\delta_j})$ the corresponding stopping
index defined in \eqref{eq:def-discr-litk}. Then $x_{k^j_*}^{\delta_j}$
converges to a solution $x^*$ of \eqref{eq:inv-probl} as $j \to \infty$.
Moreover, if \eqref{eq:kern-cond} holds, then $x_{k^j_*}^{\delta_j}$
converges to $x^\dag$.
\end{theorem}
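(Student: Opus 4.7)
The plan is to show $x_{k^j_*}^{\delta_j} \to \tilde x$, where $\tilde x := \lim_k x_k$ is the limit of the noise-free iteration (which coincides with \textsc{iTK} when $\delta = 0$). By Theorem~\ref{th:exact} this limit exists and is a solution of \eqref{eq:inv-probl}; under \eqref{eq:kern-cond} the same theorem identifies $\tilde x$ with $x^\dag$, yielding the second assertion. A preliminary observation I shall exploit is that the exact iteration becomes stationary at the first step at which it reaches a solution: if $F_i(x_k) = y_i$ for every $i$, then $x_k$ minimises each $J_k$ in \eqref{eq:def-Jk} (with $y^\delta = y$) with value zero, and uniqueness of the minimiser under (A4) (as in the opening paragraph of the proof of Lemma~\ref{le:cont-nl-litk}) forces $x_{k+1} = x_k$.

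I would then argue by the subsequence principle. Given an arbitrary subsequence of $\{j\}$, I pass to a further subsequence $\{j_l\}$ along which either (i)~$k^{j_l}_* = m_*$ is constant, or (ii)~$k^{j_l}_* \to \infty$. In case~(i), Lemma~\ref{le:cont-nl-litk} gives $x_{m_*}^{\delta_{j_l}} \to x_{m_*}$, while Proposition~\ref{prop:st-f-litk} combined with the continuity of the $F_i$ and $\delta_{j_l} \to 0$ shows $F_i(x_{m_*}) = y_i$ for every $i$. Hence $x_{m_*}$ is a solution of \eqref{eq:inv-probl} and the preliminary observation yields $x_{m_*} = \tilde x$.

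For case~(ii) I would rely on an \textsc{l-iTK} monotonicity inequality with pivot $\tilde x$ itself. Inspection of the proofs of Lemmas~\ref{lem:monot-aux}--\ref{lem:monot-aux2} shows that only the weaker hypothesis that the pivoting solution lies in $B_\rho(x_0)$ is actually used (via (A2)), a condition met by $\tilde x$ since $\tilde x \in B_{\rho/4}(x^*) \subset B_\rho(x_0)$. Combining the resulting inequality \eqref{eq:monot-aux-litk} with $\omega_k \in \{0,1\}$, the loping definition \eqref{eq:def-omk}, and the bound on $\tau$ in \eqref{eq:def-alp-tau}, I obtain $\|x_{k+1}^\delta - \tilde x\| \le \|x_k^\delta - \tilde x\|$ for every $k \in \N$. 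Given $\varepsilon > 0$, I fix $K$ with $\|x_K - \tilde x\| < \varepsilon/2$; for $l$ large enough, $k^{j_l}_* > K$ and Lemma~\ref{le:cont-nl-litk} yields $\|x_K^{\delta_{j_l}} - x_K\| < \varepsilon/2$, whence the monotonicity estimate gives $\|x_{k^{j_l}_*}^{\delta_{j_l}} - \tilde x\| \le \|x_K^{\delta_{j_l}} - \tilde x\| < \varepsilon$.

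The main obstacle I foresee is precisely the justification of monotonicity with $\tilde x$ as pivot, since (A3) only provides a solution in $B_{\rho/4}(x_0)$ whereas the distance of $\tilde x$ from $x_0$ is not \emph{a priori} controlled more tightly than $\rho/2$. Once this point is carefully traced through Lemma~\ref{lem:monot-aux}, the two cases (i) and (ii) both produce the same sub-subsequential limit $\tilde x$, and the subsequence principle then upgrades this to convergence of the full sequence $x_{k^j_*}^{\delta_j}$ to $\tilde x$, which is the desired statement.
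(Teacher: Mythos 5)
Your proof is correct and follows essentially the same route as the paper, which itself only sketches the argument by reference to the two-case proof of Theorem~3.6 in the cited steepest-descent--Kaczmarz work: split according to whether the stopping indices admit a bounded subsequence, use Proposition~\ref{prop:st-f-litk} and Lemma~\ref{le:cont-nl-litk} in the bounded case, and use monotonicity with the exact-data limit as pivot together with Lemma~\ref{le:cont-nl-litk} in the unbounded case. Your resolution of the pivot issue is sound, since $\tilde x \in B_{\rho/2}(x_0) \subset B_\rho(x_0)$ is all that the tangential cone condition in Lemma~\ref{lem:monot-aux} requires, while the containment $x_{k+1}^\delta \in B_\rho(x_0)$ is already supplied by Proposition~\ref{prop:monot} with the pivot $x^*$ from (A3).
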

\begin{proof}
The proof is analogous to the proof of \cite[Theorem~3.6]{CHLS08} and is divided
in two cases. In the second case (the sequence $k^j_*$ is not bounded) one has
to argue with Lemma~\ref{le:cont-nl-litk}.
%
%
\end{proof}

\section{Applications} \label{sec:applications}

In this section we address parameter identification problems in elliptic equations.
In the focus is the question whether the {\em local tangential cone condition}
\eqref{eq:a-tcc} is satisfied.

Part of the following analysis is based on the verification of a stronger condition,
which implies the local tangential cone condition, namely the {\em (adjoint) range
invariance condition}:%
\footnote{For a proof that the local tangential cone condition follows from the
range invariance condition, see \cite{HanNeuSch95}.}
\begin{quote}
There exists a family  of bounded linear operators $R_x : Y \nach Y$ and a positive constant such that
\begin{equation}\label{eq:ric}
F'(x) = R_x F'(x^\dag) \ \text{ and } \ \|R_x - id \| \ge c \|x- x^\dag\|_X
\, ,\ x \in B_\rho(x^0) \, .
\end{equation}
\end{quote}
It is a well known fact that the range invariance condition implies that $\text{range}(F'(x)) = \text{range}(F'(x^\dag))$, $x \in B_\rho(x^0)$.

The model problem under investigation is an elliptic boundary value problem
\begin{eqnarray}\label{eq:bvp1}
-(au_s)_s + (b u)_s + cu  &= &f\,,\, \text{ in }(0,1) \\\label{eq:bvp2}
-\alpha_0 u_s(0) + \beta_0 u(0) & = & g_0, -\alpha_1 u_s(1) + \beta_1 u(0) \; = \; g_1\,.
\end{eqnarray}
Here $f$ is a given function in $L^2(0,1)$  and  $\alpha_i, \beta_i, g_i$ are real
numbers specified below.
To simplify the discussion we consider here the one-dimensional case only, but we shall
give some hints for two- and three-dimensional cases.

The equation in \eqref{eq:bvp1} may be considered as a simplified model for a steady state convection-diffusion equation. The term $cu$ is a production term where the function $c$ depends on properties of the material. The term $ -(au_s)_s + (b u)_s $ results from an ansatz for the flux
$j:= -au_s + b u\,.$ Here $a,b$ are  functions describing the diffusion and convective part, respectively. For a concrete application see for instance \cite{BaK89}, Chapter I.2.

We want to identify the parameters $a,b,c$ from a measurement $u^\delta \in L_2(0,1)$
of the solution $u \in L_2(0,1)$ of the boundary value problem \eqref{eq:bvp1},
\eqref{eq:bvp2}.
We distinguish between three different inverse problems, namely the so called $a/b/c$--problems:
\begin{quote}
{\em The a-problem}: Find $a$ under the assumptions $b \equiv 0$, $c \equiv 0$. \\[1ex]
{\em The b-problem}: Find $b$ under the assumptions $a \equiv 1$, $c \equiv 1$. \\[1ex]
{\em The c-problem}: Find $c$ under the assumptions $a \equiv 1$, $b \equiv 0$.
\end{quote}
Each problem may be presented by a nonlinear equation of the type
$F(x)  = y$ for an appropriately chosen parameter-to-output mapping $F:D \subset
X \to Y$.

The $a$- and $c$-problem are considered in a huge amount of references whereas the
$b$-problem received less attention. It seems that the tangential cone condition for
this problem has not been investigated up to now; we do that below.
A detailed analysis of regularization methods for the identification in elliptic and
parabolic equations can be found in \cite{Bla-K96}.

\subsection{The c-problem}

Let us start the discussion with the {\em c-problem}, the most simple one.
Here the mapping $F$ is defined as follows:
$$
F : D \ni c \mapsto u(c) \in L_2(0,1) \, , \ \ D \subset X:= Y:= L_2(0,1) \, ,
$$
where $u(c)$ solves the boundary value problem
\begin{eqnarray*}
-u_{ss} + cu & = &f \, , \ \text{ in } (0,1) \\
u(0) \ = \ g_0 \, , \ \ u(1) &=& g_1
\end{eqnarray*}
in the weak sense. The domain of definition is chosen as a ball in $X:= L_2(0,1)$
(see \cite{CoK86}):
$$
D := B_\rho(c^0) \ \text{ where } \ c^0 \in L_2(0,1) \, ,
   \ c^0 \ge 0 \text{ a.e. in } (0,1) \, .
$$
Then the mapping $F$ is Fr\'echet-differentiable in $D$ (see \cite{EngHanNeu96,KalNeuSch08})
and we have
$$
F'(c) h = \Gamma(c)^{-1} (-hu(c)) \, , \ \ F'(c)^* w = -u(c) \Gamma(c)^{-1} w \, ,
\ \ h, w  \in L_2(0,1) \, ,
$$
where $\Gamma(c) : H^2(0,1) \cap H_0^1(0,1) \to L_2(0,1)$ is defined by
$\Gamma(c)u := -u_{ss} + cu$. We assume that $c^0$ is chosen such that
$u(c) \ge \kappa$ a.e. for each $c \in D$, where $\kappa$ is a positive constant.
Then we have
\begin{equation}\label{eq:ctcc}
F'(\tilde c) = R(\tilde c,c) F'(c) \, ,\ c, \tilde c \in D \, , 
\end{equation}
with
$$
R(\tilde c,c)^* w = \Gamma(\tilde c)[u(\tilde c) u(c)^{-1} A(\tilde c)^{-1}w ] \, ,
\ w \in L_2(0,1)\, , \quad
\| R(\tilde c,c) - id \| \le \kappa_1 \|\tilde c -  c\| \, , c,\tilde c \in D \, .
$$
Here $\kappa_1$ is a positive constant. As a result, we see that the range invariance
condition is satisfied and the tangential cone condition follows.

\begin{remark}\label{bem:cmore}
The results above hold also in the two- and three-dimensional cases; no further
assumptions are necessary (see, e.g., \cite{Han97,Kal97}). Clearly, the boundary
conditions have now to be considered in the sense of trace operators.
\end{remark}

\subsection{The b-problem}

Here the parameter-to-output mapping $F$ is defined as follows:
$$
F : D \ni b \mapsto u(b) \in L_2(0,1) \, , \ \ D \subset X:= H^1(0,1)\, ,
\ \ Y:= L_2(0,1) \, ,
$$
where $u(b)$ solves the boundary value problem
\begin{eqnarray*}
-u_{ss} + (bu)_s + u & = &f \, , \ \text{ in } (0,1) \\
- u_s(0) + bu(0)\ = \ g_0 \, ,\ \ -u_s(1)+b u(1) &=& g_1
\end{eqnarray*}
in the weak sense. The boundary value problem above is uniquely solvable in $H^1(0,1)$
whenever $\|b\|_X$ is small enough, which can be seen from an application of the
Lax-Milgram-Lemma. Therefore we choose $D$ as a ball $ B_\rho :=
\{ x \in X \, | \ \|x\|_X \le \rho\}$ in $X$ with $\rho$ small enough such that
$u(b)$ is uniquely determined for each $b \in B_\rho$. Additionally, the
assumption that each parameter $b$ belongs to $H^1(0,1)$ ensures that the solution
$u(b)$ is in $H^2(0,1)$.

Let $b \in B_\rho$. Then $F$ is Fr\'echet-differentiable in $b$ and
$F'(b) h = v$, where $v$ solves
\begin{eqnarray} \label{eq:bpres}
- v_ss + (bv)_s + v & = & -(hu)_s \text{ in } (0,1) \, , \\
-v_s + bv \big|_0^1 & = & -hu \big |_0^1
\end{eqnarray}
We want to verify an inequality which leads to the tangential cone condition.
Let $u = u(b)$, $\tilde u =  u(\tilde b)$ with $\tilde b$, $b \in  B_\rho(b^0)$.
Moreover let $v:= F'(b)(\tilde b -b)$. We define the mapping $Q(b): Y \nach H^1(0,1)$
where $\psi := Q(b)w$ solves the boundary value problem
$$
-\psi_{ss} - b\psi_s + \psi = w \text{ in }(0,1) \, , \quad \psi_s(0)= \psi_s(1)=0 \, ,
$$
in a weak sense. Since $b \in H^1(0,1)$ we see that $\psi$ is more regular, namely
$\psi \in H^2(0,1)$.

Let $w \in Y, \|w\|_Y \le 1, $ and let $\psi := Q(b)w$. Then
\begin{eqnarray*}
\ipl \tilde u  - u - F'(b)(\tilde b - b), w \ipr_Y &=& \ipl \tilde u  - u - v, w \ipr_Y \\
&=& \ipl \tilde u - u - v, -\psi_{ss} -b\psi_s + \psi \ipr_Y \\
&=&  \ipl -(\tilde u - u)_{ss} + [b(\tilde u - u)]_s + (\tilde u - u), \psi \ipr_Y \\
&& + \ipl v_{ss} - [bv]_s - v, \psi \ipr_Y + (\tilde b -b) (\tilde u - u) \psi \big |_0^1 \\
&=&  \ipl [(b - \tilde b) \tilde u]_s, \psi \ipr_Y + \ipl [(\tilde b - b)u]_s, \psi \ipr_Y
  + (\tilde b -b) (\tilde u - u) \psi \big |_0^1 \\
&=& \ipl (\tilde b - b)(\tilde u -u) , \psi_s\ipr_Y \, .
\end{eqnarray*}
This implies 
\begin{eqnarray*}
\|F(\tilde b) - F(b) - F'(b)(\tilde b - b)\|_Y
&  =  & \sup_{\|w\|_Y \le 1} |\ipl \tilde u  - u - F'(b)(\tilde u - u), w \ipr_Y| \\
& \le & \sup_{\|w\|_Y \le 1} |\ipl (\tilde b - b)(\tilde u -u) , (Q(b)w)_s\ipr_Y| \\
& \le & \| (\tilde b - b)(\tilde u -u) \|_{L^2(0,1)}
        \sup_{\|w\|_Y \le 1} \| (Q(b)w)_s \|_{L^2(0,1)} \\
& \le & \| \tilde b - b \|_{L^\infty(0,1)} \| \tilde u -u \|_{L^2(0,1)}
        \sup_{\|w\|_Y \le 1} \| Q(b)w \|_{H^1(0,1)} \, ,
\end{eqnarray*}
and we derive the estimate
\begin{equation}\label{eq:bcone}
\|F(\tilde b) - F(b) - F'(b)(\tilde b - b)\|_Y  \le \kappa_2 \|\tilde b -b \|_{H^1(0,1)} \|\tilde u - u\|_{L^2(0,1)} \, ,
\end{equation}
where the constant $\kappa_2$ depends on the norm of the mapping $Q(b)\,.$

\begin{remark}\label{bem:bmore2}
The formulation of the $b$-problem above can be easily generalized to the two-dimensional
case.%
\footnote{Due to the Sobolev embedding theorem of $H^s$ in $L^\infty$, in the two-dimensional
case the parameter space $X$ has to be chosen a a subset of $H^{1+\varepsilon}$, for some
$\varepsilon > 0$.}
The convection term  in this case is $ \partial_1 (bu) +  \partial_2 (bu)$ and again
a scalar function $b$ has to be identified. The situation is different when one models the
first order term in the equation by $b_1 \partial_1 u + b_2 \partial_2 u$ \cite{Isa06}.
Then one has to identify two parameters and the analysis is much more
delicate. It seems that the identification problems has not been considered in the
framework chosen above; see \cite{ChY02} for the investigation of identifiably
for this inverse problem.
\end{remark}

%
%
\subsection{The a-problem}

Here the parameter-to-solution mapping  $F$ is defined by
$$F: D \ni a \mapsto u(a) \in L_2(0,1)\, ,\ D \subset X:= Y:= L_2(0,1) \, , $$
where $u(a) $ solves the boundary value problem
\begin{eqnarray*}
-(au_{s})_s  & = &f \, , \ \text{ in }(0,1) \\
u(0) \ = \ g_0 \, , \ \ u(1) &=& g_1
\end{eqnarray*}
in the weak sense. The domain of definition is chosen as
$$
D := \{ a \in H^1(0,1) \, | \ a(s) \ge \underline{a} \text{ a.e.} \} \, ,
$$
where $\underline{a}$ is a positive constant. One can prove \cite{KalNeuSch08} that
$F$ is Fr\'echet differentiable in $D$ with
\begin{equation} \label{eq:apres}
F'(a) h = A(a)^{-1} ( (-hu(c)_s)_s ) \, , \ \
F'(c)^* w = -J^{-1}[u(a)_s( A(a)^{-1} w)_s]\, , \ \
h, w  \in L_2(0,1) \, ,
\end{equation}
where $A(a) : H^2(0,1) \cap H_0^1(0,1) \to L_2(0,1)$ is defined as $A(a)u := -(au_{s})_s$
and $J: H^2(0,1) \to L_2(0,1)$ is defined by $J\psi := - \psi_{ss} + \psi$
($J$ is the adjoint of the embedding of $H^1(0,1) $ into $L_2(0,1)$). In \cite{KalNeuSch08}
it is shown that the tangential cone condition is satisfied.

\begin{remark} \label{bem:amore}
The results in this section strongly benefit from the fact that the model is
one-dimensional.
One can see this for instance that, due to the choice of the parameter space,
each admissible parameter is a continuous function. In the two- or three-dimensional
case additional assumptions are necessary in order to obtain the same results
(see, e.g., \cite{Han97}).
\end{remark}

\begin{remark}\label{bem:bmore1}
It seems that the range invariance condition cannot be proved (even under stronger
regularity assumptions) for the $a$- and the $b$-problem, respectively; for the
$a$-problem see \cite{HanNeuSch95}. Notice that the presentation of the
Fr\'echet-derivative in \eqref{eq:apres}, \eqref{eq:bpres} cannot be handled in
the same way as in the case of the $c$-problem.
\end{remark}

\section{Conclusions} \label{sec:conclusion}

In this paper we propose a new iterative method for inverse problems
of the form \eqref{eq:inv-probl}, namely the \textsc{iTK} iteration.
In the  case of noisy data, we also propose a loping version of
\textsc{iTK}, namely, the \textsc{l-iTK} iteration.

In the particular case of dealing with a single operator equation ($N=1$ in (2)),
\textsc{iTK} and \textsc{l-iTK} are the same iteration and reduce to the classical
iterated Tikhonov method.
To the best of our knowledge this method has so far been investigated only for
linear problems \cite{BS87, HG98, Lar75} and the convergence analysis for nonlinear
operator equations was still open.

\subsubsection*{Three good reasons for using the loping iteration}

The first reason is a numerical one: \\
Notice that, \eqref{eq:resid-estim} allow us to conclude $\omega_k = 0$
without having to compute $x_{k+1/2}$ at all. Therefore, after a large
number of iterations, $\omega_k$ will vanish for some $k$ within each
iteration cycle and the computational expensive evaluation of $x_{k+1/2}$
(solution of a nonlinear equation) might be loped, making the \textsc{l-iTK}
method in \eqref{eq:litk} a fast alternative to the \textsc{iTK} method as
well as to classical Kaczmarz type methods \cite{KowSch02, Byr09}.
\medskip

\noindent The second reason is of analytical nature: \\
An alternative to relax the assumption on the boundedness of the sequence
$\{ k^j_* \}_{j\in\N}$ in Theorem~\ref{th:noisy-nl} and still prove a
semiconvergence result, is the introduction of the loping strategy above.
This is done in Theorem~\ref{th:conv-an-litk}.
\medskip

\noindent The third reason is of heuristic nature: \\
The rules for choosing the stooping index $k_*^\delta$ in
\eqref{eq:def-discrep} and in \eqref{eq:def-discr-litk} are quite different.
According to \eqref{eq:def-discrep} the \textsc{iTK} iteration should be
stopped when for the first time one of the equations of system
\ref{eq:inv-probl} is satisfied within a specified threshold. Therefore,
at the iteration step $x_{k_*^\delta}^\delta$, we cannot control all the
residuals $\norm{ F_{i}(x_{k}^\delta) - y_{i}^\delta }$ within the cycle. \\
According to \eqref{eq:def-discr-litk} however, the \textsc{l-iTK} iteration
only stops when all the residuals $\norm{ F_{i}(x_{k}^\delta) - y_{i}^\delta }$,
$i = 0,\dots,N-1$ drop below a specified threshold. Consequently, although
the \textsc{l-iTK} iteration needs more steps to reach discrepancy, it
produces an approximate solution $x_{k_*^\delta}^\delta$ which better fits
all the system data.

\section*{Acknowledgments}

We would like to thank Prof.\,M.Burger (M\"unster) for useful and
stimulating discussions.
A.DC. acknowledges support from CNPq grant 474593/2007--0.
The work of A.L. is supported by the Brazilian National Research
Council CNPq, grant 303098/2009--0, and by the Alexander von Humbolt
Foundation AvH.

\bibliographystyle{amsplain}
\bibliography{kaczIT}

\end{document}